\theoremstyle{plain}
\newtheorem{theorem}{Theorem}
\newtheorem{lemma}[theorem]{Lemma}
\newtheorem{prop}[theorem]{Proposition}
\newtheorem{cor}[theorem]{Corollary}
\theoremstyle{definition}
\newtheorem{defn}[theorem]{Definition}
\newtheorem{exmp}[theorem]{Example}
\newtheorem{remark}[theorem]{Remark}
\newcommand{\N}{\mathbb{N}}
\newcommand{\Z}{\mathbb{Z}}
\newcommand{\Ng}{\mathcal{N}_g}
\newcommand{\NOne}{\mathcal{N}_1}
\DeclareMathOperator{\spanK}{span}
\DeclareMathOperator{\degree}{deg}
\title{Group gradations on Leavitt path algebras}
\date{\today}
\begin{document}

\author{Patrik Nystedt}
\address{Department of Engineering Science,
University West,
SE-46186 Trollh\"{a}ttan, Sweden}
\email{patrik.nystedt@hv.se}

\author{Johan \"{O}inert}
\address{Department of Mathematics and Natural Sciences,
Blekinge Institute of Technology,
SE-37179 Karlskrona, Sweden}
\email{johan.oinert@bth.se}

\subjclass[2010]{16S99, 16W50}
\keywords{s-unital ring, strongly graded ring, epsilon-strongly graded ring, Leavitt path algebra}

\begin{abstract}
Given a directed graph $E$ and an associative unital ring $R$ one may define
the Leavitt path algebra with coefficients in $R$, denoted by $L_R(E)$.
For an arbitrary group $G$,
$L_R(E)$ can be viewed as a $G$-graded ring.
In this article, we show that
$L_R(E)$ is always nearly epsilon-strongly $G$-graded.
We also show that if $E$ is finite, then $L_R(E)$
is epsilon-strongly $G$-graded.
We present a new proof
of Hazrat's characterization of strongly $\Z$-graded Leavitt path algebras,
when $E$ is finite.
Moreover, if $E$ is row-finite and has no source,
then we show that $L_R(E)$ is strongly $\Z$-graded if and only if $E$ has no sink.
We also use a result concerning Frobenius epsilon-strongly $G$-graded rings,
where $G$ is finite, to obtain criteria which ensure that $L_R(E)$
is Frobenius over its identity component.
\end{abstract}

\maketitle


\section{Introduction}\label{sec:Intro}

A Leavitt path algebra associates with a directed graph $E$ 
and an associative unital ring $R$, an $R$-algebra $L_R(E)$.
These algebras are algebraic analogues of graph $C^*$-algebras
and are also natural generalizations of Leavitt algebras of type $(1,n)$
constructed in \cite{leavitt1962}.
Leavitt path algebras were introduced, in the case when $R$ is a field,
by Ara, Moreno and Pardo \cite{AMP2007},
and almost simultaneously by Abrams and Aranda Pino \cite{AAP2005}, 
using a different approach.
In the case when $R$ is a commutative ring these structures
were defined by Tomforde in \cite{T2011}.
Throughout this article we will follow Hazrat \cite{H2013A} and consider Leavitt path algebras
with coefficients
in an arbitrary associative unital ring $R$
(see Definition~\ref{def:LPA}).
Each Leavitt path algebra may, in a canonical way, be viewed as a $\Z$-graded ring (see Section~\ref{sec:leavittpathalgebras}).
The characterization of Leavitt path algebras (e.g. simple, purely infinite simple,
semisimple, prime, finite dimensional, locally finite, exchange, artinian, noetherian) 
in terms of properties of the underlying graph has been the subject 
of many studies, see Abrams' extensive survey \cite{AbramsDecade}, 
and the references therein.
Many of these investigations have, however, been carried out
without taking into account the $\mathbb{Z}$-graded structure of $L_R(E)$.
Although there are previous studies (see e.g. Ara, Moreno and Pardo \cite{AMP2007}, Tomforde \cite{T2007})
where the $\Z$-graded structure of $L_R(E)$ has been taken into account,
it seems to the authors of the present article that Hazrat \cite{H2013A}
was first to utilize this philosophy systematically, and to e.g. study properties of the $\Z$-gradation itself.
In loc. cit. he proves, among many other things, the following result.

\begin{theorem}[Hazrat \cite{H2013A}]\label{thm:Hazrat2}
If $E$ is a finite directed graph and $R$ is an associative unital ring,
then the Leavitt path algebra $L_R(E)$ 
is strongly $\Z$-graded if and only if $E$ has no sink.
\end{theorem}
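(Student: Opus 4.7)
Use the classical fact that a $\Z$-graded ring $S$ is strongly graded iff $1\in S_nS_{-n}$ for every $n\in\Z$, and note that this reduces by induction on $|n|$ to the two base cases $1\in S_1S_{-1}$ and $1\in S_{-1}S_1$: from decompositions $1=\sum_jx_jy_j$ with $x_j\in S_{n-1},\,y_j\in S_{-(n-1)}$ and $1=\sum_ka_kb_k$ with $a_k\in S_1,\,b_k\in S_{-1}$, one gets $1=\sum_{j,k}(x_ja_k)(b_ky_j)\in S_nS_{-n}$. Since $E^0$ is finite, $1_{L_R(E)}=\sum_{v\in E^0}v$, so the task reduces to placing each vertex in both $L_R(E)_1L_R(E)_{-1}$ and $L_R(E)_{-1}L_R(E)_1$.

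\textbf{Forward direction.} If $v\in E^0$ is a sink, then no path of positive length starts at $v$, so $v\cdot L_R(E)_1=0$. A decomposition $1=\sum_ic_id_i$ with $c_i\in L_R(E)_1$, $d_i\in L_R(E)_{-1}$ would then give $v=v\cdot 1=\sum_i(vc_i)d_i=0$, contradicting that $v$ is a nonzero idempotent.

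\textbf{Reverse direction.} Assume $E$ has no sinks. The first containment is immediate from Cuntz--Krieger applied at every vertex:
\[
1=\sum_{v\in E^0}v=\sum_{e\in E^1}ee^*\in L_R(E)_1L_R(E)_{-1}.
\]
For the companion containment, I will show each $v\in E^0$ lies in $L_R(E)_{-1}L_R(E)_1$. If $v=r(e)$ for some edge $e$, then $v=e^*e$ already suffices. The substantive case is when $v$ is a source. My plan is to iterate Cuntz--Krieger in the forward direction: whenever a term $\alpha\alpha^*$ has $r(\alpha)$ not on any cycle, expand it as $\sum_{s(f)=r(\alpha)}(\alpha f)(\alpha f)^*$. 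Letting $T\subseteq E^0$ denote the set of non-cycle vertices, each branch of this expansion terminates after at most $|T|$ rounds: the successive vertices visited while intermediate ranges remain in $T$ must all be distinct, since a repeat would produce a cycle lying wholly inside $T$. This yields a finite identity
\[
v=\sum_{p\in P}pp^*,
\]
where each $p\in P$ starts at $v$ and has $r(p)$ on some cycle. For each such $p$, pick a path $q_p$ of length $|p|+1$ ending at $r(p)$; such a $q_p$ exists because any cycle vertex admits incoming paths of every positive length (take a tail of its cycle of the appropriate length and wind around the cycle as often as needed). Then
\[
pp^*=p\cdot r(p)\cdot p^*=pq_p^*q_pp^*=(pq_p^*)(q_pp^*),
\]
with $pq_p^*\in L_R(E)_{-1}$ and $q_pp^*\in L_R(E)_1$. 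Summing over $P$ places $v$ in $L_R(E)_{-1}L_R(E)_1$.

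\textbf{Expected difficulty.} The non-routine points are the termination of the forward expansion and the existence of incoming paths of every length into a cycle vertex; both are elementary consequences of $E$ being finite with no sinks. Once they are in place the grading bookkeeping in $(pq_p^*)(q_pp^*)$ closes the argument.
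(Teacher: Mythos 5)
The paper does not actually prove this statement: it is quoted verbatim from Hazrat \cite{H2013A} and used as a black box, so there is no internal proof to compare against. Judged on its own, your argument is correct and essentially reconstructs the standard proof. The reduction of strong $\Z$-gradedness to $1\in S_1S_{-1}$ and $1\in S_{-1}S_1$ is the usual telescoping argument; the forward direction correctly exploits $vS_1=0$ for a sink $v$ (every monomial spanning $S_1$ is $\alpha\beta^*$ with $l(\alpha)\geq 1$); and in the reverse direction both of the points you flag as non-routine do go through: the iterated Cuntz--Krieger expansion terminates because a repeated vertex along a branch whose intermediate ranges avoid cycles would itself lie on a closed path, hence on a cycle, and the winding construction does produce incoming paths of every positive length at a cycle vertex, so $pp^*=(pq_p^*)(q_pp^*)$ with the stated degrees. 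Two small things you should make explicit: (i) the contradiction in the forward direction rests on the fact that each vertex is a \emph{nonzero} idempotent of $L_R(E)$, which for a general unital coefficient ring $R$ is a genuine (though standard) fact requiring a representation argument -- cite it rather than assert it; (ii) the case split in the reverse direction should be stated as ``$v$ has an incoming edge'' versus ``$v$ is a source'', since these are the exhaustive alternatives your two constructions cover. With those citations added, the proof is complete and is, to the best of my knowledge, in the same spirit as Hazrat's original argument.
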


To motivate the approach taken in this article, 
consider the following directed graphs.
\begin{displaymath}
	\xymatrix{
A:        & \bullet \ar@(ul,ur)        & \ar[l] \bullet & 
B:        & \bullet \ar@(ur,ul) \ar[r] & \bullet        & 
C:        & \bullet \ar[r]^{(\infty)}  & \bullet \\
	}	
\end{displaymath}
According to Theorem~\ref{thm:Hazrat2}, the graph $A$ produces a
strongly $\Z$-graded Leavitt path algebra, but the graphs $B$ and $C$ do not.
In an attempt to single out a class of well-behaved group graded rings which
naturally includes all crossed products by unital twisted partial actions, 
Nystedt, \"{O}inert and Pinedo \cite{NOP2016} recently introduced a 
generalization of unital strongly graded rings called \emph{epsilon-strongly graded rings}
(see Definition~\ref{def:epsilongraded}).
In this article, we establish the following result which shows that
$L_R(B)$ is epsilon-strongly $\Z$-graded.

\begin{theorem}\label{maintheoremfinite}
If $E$ is a finite directed graph and $R$ is an associative unital ring,
then the Leavitt path algebra $L_R(E)$ is epsilon-strongly $\Z$-graded.
\end{theorem}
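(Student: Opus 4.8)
The plan is to verify the two conditions characterising an epsilon-strongly graded ring: that $S:=L_R(E)$ is \emph{symmetrically} $\Z$-graded, i.e. $S_nS_{-n}S_n=S_n$ for every $n$, and that each $S_nS_{-n}$ is a \emph{unital} ring. Here $S_n$ denotes the degree-$n$ component, which is the $R$-span of the standard monomials $\alpha\beta^{*}$ with $|\alpha|-|\beta|=n$. Since $E^{0}$ is finite, $S$ is unital with $1=\sum_{v\in E^{0}}v\in S_{0}$, so each $S_nS_{-n}$ is a genuine two-sided ideal of the unital ring $S_{0}$, and showing it is unital amounts to showing it is generated by a central idempotent of $S_{0}$. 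Together these two conditions are equivalent to the defining requirement that for each $n$ there exist $\epsilon_n\in S_nS_{-n}$ with $\epsilon_n s=s=s\epsilon_{-n}$ for all $s\in S_n$.

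For the non-negative degrees I would produce the idempotents explicitly. Put $\epsilon_{n}=\sum_{|\mu|=n}\mu\mu^{*}$ for $n\ge 0$; this is a \emph{finite} sum precisely because $E$ is finite, and it lies in $S_nS_{-n}$. Using only the relation $e^{*}f=\delta_{e,f}\,r(e)$, the product $\mu^{*}\alpha$ vanishes unless $\mu$ is the length-$n$ prefix of $\alpha$, in which case it strips that prefix off; a short computation then gives $\epsilon_n(\alpha\beta^{*})=\alpha\beta^{*}$ for every $\alpha\beta^{*}\in S_n$ and, dually, $(\gamma\delta^{*})\epsilon_n=\gamma\delta^{*}$ for every $\gamma\delta^{*}\in S_{-n}$. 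Hence $\epsilon_n$ is a two-sided identity for the ring $S_nS_{-n}$, so that ideal is unital. Moreover these same two facts already settle the symmetric-grading condition \emph{for all degrees}: from $\epsilon_n S_n=S_n$ we get $S_nS_{-n}S_n=S_n$, while from the right-identity property $S_{-m}\epsilon_m=S_{-m}$ we get $S_{-m}S_mS_{-m}=S_{-m}$. (Equivalently, the pairs $(\mu,\mu^{*})_{|\mu|=n}$ form a finite dual basis exhibiting $S_n$ as a finitely generated projective right $S_{0}$-module.) I expect this part to be routine.

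The main obstacle is the negative degrees: one must show that each $S_{-m}S_m$ with $m>0$ is unital, which is exactly what supplies the missing conditions $s\epsilon_{-m}=s$ for $s\in S_m$. Unlike the positive case there is no clean closed form for the identity. The naive candidate $\sum_{|\mu|=m}\mu^{*}\mu=\sum_{|\mu|=m}r(\mu)$ \emph{overcounts} (it can equal $2\,r(\mu)$ when two length-$m$ paths share a range), and small graphs show that the true identity can be a proper sub-idempotent such as $1-ee^{*}$ whose value depends on the global behaviour of $E$ near its sinks and cycles. What is available cheaply is that $S_{-m}S_m$ equals the two-sided ideal $\sum_{|\mu|=m}S_{0}\,r(\mu)\,S_{0}$ generated by the finitely many vertex idempotents $r(\mu)$, and that it is always \emph{s-unital}; this s-unitality, together with the symmetry above, is the general ``nearly epsilon-strongly graded'' statement that holds for arbitrary $E$. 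Finiteness of $E$ is essential for the upgrade from s-unital to unital: for infinite graphs the analogue genuinely fails, in the same way that the s-unital but non-unital ideal $M_{\infty}(R)$ sits inside a unital overring.

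To carry out the upgrade I would use the structure of $S_{0}$. Writing $S_{0}$ as the directed union of unital subalgebras $B_{k}$ (each sharing the identity $1=\sum_v v$), finiteness of $E$ makes every $B_{k}$ a finite direct sum of matrix algebras over $R$ indexed by vertices. Because $S_{-m}S_m$ is generated by the vertex idempotents $r(\mu)\in B_{0}$, its intersection with each $B_{k}$ is the ideal of $B_{k}$ generated by these idempotents; as a nonzero idempotent of a matrix algebra generates the whole block, this intersection is a sum of entire matrix blocks, hence a direct summand of $B_{k}$ with a central identity $\epsilon^{(k)}$. These $\epsilon^{(k)}$ form an increasing chain of central idempotents, and the delicate point, where finiteness does the work, is that they \emph{stabilise}: a block supported on a vertex lying on, or feeding into, a cycle is absorbed as soon as it appears (cycles are self-reproducing via $r(\mu)=\mu^{*}\mu$), while the acyclic part contributes only finitely many blocks over the finite vertex set. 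The chain is therefore eventually constant, equal to a central idempotent $\epsilon_{-m}\in S_{0}$ that serves as the identity of $S_{-m}S_m$. With $\epsilon_n$ now in hand for every $n\in\Z$ and the symmetric-grading condition already established, $L_R(E)$ is epsilon-strongly $\Z$-graded. The step I would be most careful to pin down is the stabilisation of the $\epsilon^{(k)}$; I would prove it by tracking, level by level, which vertex blocks meet the ideal and showing that the resulting data is non-decreasing and eventually frozen.
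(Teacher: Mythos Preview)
Your treatment of the non-negative degrees is correct and is exactly the paper's argument specialised to $n\ge 0$: the elements $\mu\mu^{*}$ with $|\mu|=n$ are precisely $xx^{*}$ as $x$ runs over representatives of the $\preceq$-minimal classes of the poset $X_{n}/{\sim}$ that the paper sets up, and their sum is the required $\epsilon_{n}$. The involution argument you give for the right-identity property is also the paper's.

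Where you diverge is for $n=-m<0$. You propose to obtain the identity of $J:=S_{-m}S_{m}$ by filtering $S_{0}=\bigcup_{k}B_{k}$ through finite sums of matrix algebras and showing that the central idempotents $\epsilon^{(k)}$ of $J\cap B_{k}$ stabilise. As written this has two soft spots. First, the claim that $J\cap B_{k}$ equals the ideal of $B_{k}$ generated by the $r(\mu)$'s does not follow from $J$ being generated by these vertices in $S_{0}$: an element of $J\cap B_{k}$ may require conjugating by elements outside $B_{k}$ to reach the generators, so the inclusion $\supseteq$ is clear but $\subseteq$ needs proof. Second, even granting that description, your stabilisation heuristic (``cycles are self-reproducing, the acyclic part contributes finitely many blocks'') does not obviously control which blocks of $B_{k}$ meet $J$ as $k$ grows along the Bratteli diagram; this is exactly the step you flag as delicate, and it is not yet a proof.

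The paper bypasses all of this by running the \emph{same} construction uniformly in the degree. For arbitrary $g$ one preorders $X_{g}=\{\alpha\beta^{*}:\deg(\alpha\beta^{*})=g\}$ by declaring $\alpha\beta^{*}\le\gamma\delta^{*}$ when $\alpha$ is an initial subpath of $\gamma$; equivalence classes are indexed by $\alpha$ alone, and for a finite graph the set of minimal classes $[m_{1}],\dots,[m_{n_{g}}]$ is finite. One then puts $\epsilon_{g}=\sum_{i} m_{i}m_{i}^{*}\in S_{g}S_{g^{-1}}$. For $g\ge 0$ the minimal $\alpha$'s are exactly the length-$g$ paths, recovering your formula; for $g=-m<0$ the minimal $\alpha$'s are the vertices that receive a length-$m$ path \emph{together with} certain longer $\alpha$'s whose proper prefixes do not---this is the ``no clean closed form'' phenomenon you noticed, but it is still an explicit finite sum. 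A short initial-subpath computation gives $\epsilon_{g}x=x$ for every monomial $x\in X_{g}$, and applying the involution (exactly as you do) yields $y\,\epsilon_{g}=y$ for $y\in S_{g^{-1}}$. No filtration, no limit, and no stabilisation argument is needed.
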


It turns out that $L_R(C)$ is not epsilon-strongly $\Z$-graded (see Example~\ref{exmp:notepsilongraded}).
To capture Leavitt path algebras corresponding to the graph $C$
we introduce the class of 
\emph{nearly epsilon-strongly graded rings} (see Definition~\ref{def:nearlyepsilon}),
which is a further weakening of the notion of a strongly graded ring,
and show the following result.

\begin{theorem}\label{maintheorem}
If $E$ is a (possibly non-finite) directed graph and $R$ is an associative unital ring,
then the Leavitt path algebra $L_R(E)$ is nearly epsilon-strongly $\Z$-graded.
\end{theorem}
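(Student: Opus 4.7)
The plan is to produce, for each $n \in \Z$ and each $x \in L_R(E)_n$, an element $\epsilon \in L_R(E)_n L_R(E)_{-n}$ with $\epsilon x = x$ together with an element $\epsilon' \in L_R(E)_{-n} L_R(E)_n$ with $x \epsilon' = x$, which is what the definition of nearly epsilon-strongly graded demands. I would first invoke the standard description of the homogeneous components: every $x \in L_R(E)_n$ is a finite $R$-linear combination $x = \sum_{i \in I} r_i \alpha_i \beta_i^*$ of monomials with $r(\alpha_i) = r(\beta_i)$ and $|\alpha_i| - |\beta_i| = n$. The germ of the construction is the computation $(\alpha \beta^*)(\beta \alpha^*) = \alpha\, r(\beta)\, \alpha^* = \alpha \alpha^*$, which exhibits the idempotent $\alpha \alpha^*$ as an element of $L_R(E)_n L_R(E)_{-n}$ that left-fixes the monomial $\alpha \beta^*$. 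The real work lies in assembling such idempotents into a single left-fixer for the whole finite sum $x$.

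The naive choice $\sum_i \alpha_i \alpha_i^*$ fails, because cross-terms appear whenever some $\alpha_i$ is a proper prefix of another. To kill them, I would let $\mathcal{M}$ denote the set of those $\alpha_i$ for which no $\alpha_j$ (with $j \in I$) is a proper prefix of $\alpha_i$, and then put
\[
   \epsilon \;=\; \sum_{\alpha \in \mathcal{M}} \alpha \alpha^*.
\]
Each summand has the form $(\alpha \beta^*)(\beta \alpha^*)$ for the matching $\beta = \beta_i$, so $\epsilon$ lies in $L_R(E)_n L_R(E)_{-n}$. The key combinatorial input, which falls out of the Cuntz--Krieger relations together with the orthogonality of distinct vertices, is that for paths $\mu$ and $\nu$ the product $\mu^* \nu$ vanishes unless $s(\mu) = s(\nu)$ and one of the two paths is a prefix of the other. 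Since the set of prefixes of a given path $\alpha_j$ is totally ordered by the prefix relation, there is exactly one $\alpha^{(j)} \in \mathcal{M}$ that is a prefix of $\alpha_j$. Expanding
\[
   \epsilon x \;=\; \sum_{\alpha \in \mathcal{M},\, j \in I} r_j (\alpha \alpha^*)(\alpha_j \beta_j^*)
\]
then collapses to $x$, because for each $j$ only the term with $\alpha = \alpha^{(j)}$ survives.

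The symmetric construction, using the prefix-minimal subset $\mathcal{N}$ of $\{\beta_i : i \in I\}$, yields $\epsilon' = \sum_{\beta \in \mathcal{N}} \beta \beta^* \in L_R(E)_{-n} L_R(E)_n$ with $x \epsilon' = x$, finishing the argument. I expect the main obstacle to be the careful combinatorial bookkeeping behind the cross-term cancellation --- in particular, verifying the uniqueness of the minimal prefix in $\mathcal{M}$ for each $\alpha_j$ and the systematic application of (CK1) and vertex orthogonality to annihilate all prefix-incomparable pairs.
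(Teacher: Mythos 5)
Your proposal is correct and follows essentially the same route as the paper: decompose $x$ into monomials $\alpha_i\beta_i^*$, take the prefix-minimal $\alpha_i$'s, sum the idempotents $\alpha\alpha^* = (\alpha\beta^*)(\beta\alpha^*)$, and use the fact that $\mu^*\nu$ vanishes for prefix-incomparable paths (the paper's Lemma~\ref{lemmasubpath} and Proposition~\ref{propsubpath}) to collapse the cross-terms. The only cosmetic difference is that the paper obtains the right-fixer by applying the involution to the left-fixer of $s^*$, which yields exactly your $\epsilon'=\sum_{\beta\in\mathcal{N}}\beta\beta^*$.
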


Here is a detailed outline of this article.

In Section~\ref{sec:sunitalmodules}, we recall some notions concerning
unital and $s$-unital modules that we need in the sequel.

In Section~\ref{sec:gradedrings}, we provide the necessary background on 
group graded rings for use in subsequent sections. We define the following notions:
symmetrically graded ring,
epsilon-strongly graded ring, nearly epsilon-strongly graded ring,
non-degenerately graded ring and
strongly non-degenerately graded ring
(see Definitions~\ref{def:symmetricallygraded}, \ref{def:epsilongraded}, \ref{def:nearlyepsilon},
\ref{def:nondegenerate} resp. \ref{def:weaklynondegenerate})
and we show a result connecting these notions (see Proposition~\ref{epsilonweakly}).

In Section~\ref{sec:leavittpathalgebras}, we accomodate the Leavitt path algebra 
framework that we need in the sequel; in particular standard $G$-gradations
on Leavitt path algebras, for an arbitrary group $G$. 
In the same section, we prove $G$-graded versions of Theorem~\ref{maintheoremfinite}
and Theorem~\ref{maintheorem} (see Theorem~\ref{finitegraphepsilonstrongly}
and Theorem~\ref{graphepsilonstrongly}) through a series of propositions.
At the end of the section, we apply a result, by Pinedo and the authors of the present article,
concerning epsilon-strongly $G$-graded rings, to obtain criteria
which ensure that a Leavitt path algebra is Frobenius over its identity component,
when the group $G$ is finite (see Theorem~\ref{maintheoremfrobenius}).

In Section~\ref{sec:Zgraded}, we provide a useful characterization of strongly $\Z$-graded rings (see Proposition~\ref{prop:stronglyZgraded}) and several corollaries.

In Section~\ref{sec:ZgradedLPA}, we obtain a new proof of Hazrat's Theorem~\ref{thm:Hazrat2}
(see Proposition~\ref{prop:Hazrat}) by using our results from Section~\ref{sec:leavittpathalgebras}
and Section~\ref{sec:Zgraded}.
We also characterize strongly $\Z$-graded Leavitt path algebras
over row-finite graphs without sources (see Theorem~\ref{thm:rowfinite}).

\section{s-unital modules}\label{sec:sunitalmodules}

In this section, we recall some notions concerning
unital and $s$-unital modules that we need in the sequel.
Throughout this article all rings are associative
but not necessarily unital.
For the rest of this section $T$ and $T'$ denote rings.
Recall the following definitions.

If $M$ is a left $T$-module (right $T'$-module),
then $M$ is called \emph{left (right) unital} if there is
$t \in T$ ($t' \in T'$) such that, for all $m \in M$, the relation 
$t m = m$ ($m t' = m$) holds. 
In that case $t$ ($t'$) is said to be \emph{a left (right) identity for $M$}.
If $M$ is a $T$--$T'$-bimodule, then $M$ is called \emph{unital}
if it is unital both as a left $T$-module and as a right $T'$-module.
The ring $T$ is said to be \emph{left (right) unital} if it is
left (right) unital as a left (right) module over itself.
The ring $T$ is called \emph{unital} if it is unital
as a bimodule over itself.

In this article, we will use the notion of s-unitality,
introduced in \cite{tominaga1976}.
Namely, if $M$ is a left $T$-module (right $T'$-module), then
$M$ is said to be \emph{$s$-unital} if for each $m \in M$ the relation
$m \in Tm$ ($m \in mT'$) holds. 
If $M$ is a $T$--$T'$-bimodule, then $M$ is said to be
\emph{$s$-unital} if it is $s$-unital both as a left $T$-module
and as a right $T'$-module.
The ring $T$ is said to be \emph{left (right) $s$-unital}
if it is left (right) $s$-unital as a left (right) module over itself.
The ring $T$ is said to be \emph{$s$-unital} if it is $s$-unital
as a bimodule over itself.

\begin{remark}\label{tominaga}
From \cite[Theorem 1]{tominaga1976} it follows that if $M$ is a left $T$-module (right $T'$-module),
then $M$ is s-unital if and only if for all $n \in \mathbb{N}$
and all $m_1,\ldots,m_n \in M$ there exists $t \in T$ ($t' \in T$) such that,
for each $i \in \{ 1 , \ldots , n \}$, the equality
$t m_i = m_i$ ($m_i t' = m_i$) holds.
\end{remark}

Let $M$ be a left $T$-module (right $T'$-module). 
Recall that $M$ is called
\emph{torsion-free} if for all non-zero $m \in M$ the relation 
$Tm \neq \{ 0 \}$ ($mT' \neq \{ 0 \}$) holds.
If $M$ is a $T$--$T'$-bimodule which is torsion-free both as a left $T$-module and as a right $T'$-module, then $M$ is said to be \emph{torsion-free}.
It is clear that the following chain of implications hold:
\begin{equation}\label{implications}
\mbox{$M$ is unital $\Rightarrow$ $M$ is s-unital $\Rightarrow$ $M$ is torsion-free.}
\end{equation}

\section{Graded rings}\label{sec:gradedrings}

In this section, we provide the necessary background on 
group graded rings. In particular, we define the notion of 
a symmetrically graded ring, an epsilon-strongly graded ring, 
a nearly epsilon-strongly graded ring, a non-degenerately graded ring
resp. a strongly non-degenerately graded ring
(see Definitions~\ref{def:symmetricallygraded}, \ref{def:epsilongraded}, \ref{def:nearlyepsilon},
\ref{def:nondegenerate} resp. \ref{def:weaklynondegenerate})
and show a result connecting these notions (see Proposition~\ref{epsilonweakly}).

Throughout this section, $G$ denotes a group with identity element $e$
and $S$ denotes an associative ring which is \emph{$G$-graded} (or \emph{graded by $G$}).
Recall that this means that for each $g \in G$, there is an additive subgroup $S_g$ of $S$
such that $S = \oplus_{g \in G} S_g$ and for all $g,h \in G$
the inclusion $S_g S_h \subseteq S_{gh}$ holds.
If, in addition, for all $g,h \in G$ the 
equality $S_g S_h = S_{gh}$ holds, then $S$ is said to be
\emph{strongly $G$-graded} (or \emph{strongly graded by $G$}).
For an overview of the theory of group graded rings, we refer the reader to \cite{NVO2004}.

Following \cite[Definition 4.5]{CEP2016} we make the following definition.

\begin{defn}\label{def:symmetricallygraded}
The ring $S$ is said to be 
\emph{symmetrically $G$-graded}
(or \emph{symmetrically graded by $G$})
if for each $g \in G$
the relation $S_g S_{g^{-1}} S_g = S_g$ holds.
\end{defn}

We now recall from \cite{NOP2016} the following weakening of 
the notion of a unital strongly graded ring.

\begin{defn}\label{def:epsilongraded}
The ring $S$ is said to be
\emph{epsilon-strongly $G$-graded}
(or \emph{epsilon-strongly graded by $G$})
if for each $g \in G$ the $S_g S_{g^{-1}}$--$S_{g^{-1}} S_g$-bimodule $S_g$ is unital.
\end{defn}

The following characterization of epsilon-strongly graded
rings more or less follows from \cite[Proposition 7]{NOP2016}.
For the convenience of the reader we include a proof.

\begin{prop}\label{equivalent}
The following assertions are equivalent:
\begin{itemize}

\item[(i)] The ring $S$ is epsilon-strongly $G$-graded;

\item[(ii)] The ring $S$ is symmetrically $G$-graded and for each $g \in G$
the ring $S_g S_{g^{-1}}$ is unital;

\item[(iii)] For each $g \in G$ there exists $\epsilon_g \in S_g S_{g^{-1}}$
such that for all $s \in S_g$ the equalities 
$\epsilon_g s = s \epsilon_{g^{-1}} = s$ hold.

\end{itemize}
\end{prop}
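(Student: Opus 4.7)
The plan is to prove the cycle $(\mathrm{iii}) \Rightarrow (\mathrm{i}) \Rightarrow (\mathrm{ii}) \Rightarrow (\mathrm{iii})$. The implication $(\mathrm{iii}) \Rightarrow (\mathrm{i})$ should be essentially definitional: the element $\epsilon_g$ supplied by (iii) is a left identity for $S_g$ in the bimodule structure of Definition~\ref{def:epsilongraded}, and applying (iii) with $g^{-1}$ in place of $g$ exhibits $\epsilon_{g^{-1}} \in S_{g^{-1}} S_g$ as the required right identity.

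For $(\mathrm{i}) \Rightarrow (\mathrm{ii})$, the symmetric grading condition follows at once: a left identity $\epsilon_g \in S_g S_{g^{-1}}$ for $S_g$ witnesses $S_g = \epsilon_g S_g \subseteq S_g S_{g^{-1}} S_g \subseteq S_g$. The substantive task is to show that the ring $S_g S_{g^{-1}}$ is unital, for which I propose $\epsilon_g$ itself as a two-sided identity. The left identity property is immediate from associativity. For the right identity property, I first prove the key lemma that $\epsilon_g$ coincides with $\eta_{g^{-1}}$, the right identity of $S_{g^{-1}}$ in $S_g S_{g^{-1}}$ furnished by (i) applied to $g^{-1}$. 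This is done by computing the product $\epsilon_g \eta_{g^{-1}}$ in two ways: expanding $\eta_{g^{-1}}$ as a sum $\sum_i u_i v_i$ with $u_i \in S_g$ and using $\epsilon_g u_i = u_i$ yields $\epsilon_g \eta_{g^{-1}} = \eta_{g^{-1}}$; expanding $\epsilon_g$ as $\sum_j c_j d_j$ with $d_j \in S_{g^{-1}}$ and using $d_j \eta_{g^{-1}} = d_j$ yields $\epsilon_g \eta_{g^{-1}} = \epsilon_g$. Once $\epsilon_g = \eta_{g^{-1}}$ is established, right multiplication by $\epsilon_g$ fixes every element of $S_{g^{-1}}$ and hence fixes every right factor of an element of $S_g S_{g^{-1}}$, completing the argument.

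For $(\mathrm{ii}) \Rightarrow (\mathrm{iii})$, I let $\epsilon_g$ and $\epsilon_{g^{-1}}$ denote the ring identities of the unital rings $S_g S_{g^{-1}}$ and $S_{g^{-1}} S_g$, respectively. Given $s \in S_g$, the symmetric grading assumption lets me write $s$ as a finite sum $\sum_i a_i b_i c_i$ with $a_i b_i \in S_g S_{g^{-1}}$ and $c_i \in S_g$, and associativity together with the identity property of $\epsilon_g$ then forces $\epsilon_g s = \sum_i (\epsilon_g a_i b_i) c_i = \sum_i a_i b_i c_i = s$. A symmetric calculation, using the identity of $S_{g^{-1}} S_g$, yields $s \epsilon_{g^{-1}} = s$.

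The main obstacle is the coincidence $\epsilon_g = \eta_{g^{-1}}$ inside $(\mathrm{i}) \Rightarrow (\mathrm{ii})$. Without it, (i) only produces left and right identities for $S_g$ separately, and there is no obvious reason why a single element of $S_g S_{g^{-1}}$ should simultaneously play the role of a left identity for $S_g$ and a right identity for $S_{g^{-1}}$; this synchronisation is exactly what upgrades the pair of bimodule identities to a ring identity of $S_g S_{g^{-1}}$, and thereby closes the loop with (ii).
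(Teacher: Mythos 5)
Your proof is correct and follows essentially the same route as the paper's: the same cycle of implications, the same witness for symmetric grading, and the same use of a decomposition $s=\sum_i a_ib_ic_i$ in (ii)$\Rightarrow$(iii). The only difference is that you spell out the standard ``left identity plus right identity gives a two-sided identity'' computation ($\epsilon_g=\epsilon_g\eta_{g^{-1}}=\eta_{g^{-1}}$), which the paper compresses into the single sentence ``Thus, $S_gS_{g^{-1}}$ is unital as an $S_gS_{g^{-1}}$-bimodule.''
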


\begin{proof}
(i)$\Rightarrow$(ii): 
Suppose that $S$ is epsilon-strongly $G$-graded.
Take $g \in G$. First we show that $S$ is symmetrically $G$-graded.
It is clear that $S_g S_{g^{-1}} S_g \subseteq S_g$. 
Now we show the reversed inclusion. Take $s \in S_g$.
From Definition~\ref{def:epsilongraded} it follows that there exists
$\epsilon_g \in S_g S_{g^{-1}}$ such that 
$\epsilon_g s = s$. In particular, it follows that
$s \in \epsilon_g S_g \subseteq S_g S_{g^{-1}} S_g$.
Now we show that the ring $S_g S_{g^{-1}}$ is unital.
By Definition~\ref{def:epsilongraded}, 
$S_g$ is a unital left $S_g S_{g^{-1}}$-module and
$S_{g^{-1}}$ is a unital right $S_g S_{g^{-1}}$-module.
Thus, $S_g S_{g^{-1}}$ is unital as an $S_g S_{g^{-1}}$-bimodule.

(ii)$\Rightarrow$(iii):
Suppose that $S$ is symmetrically $G$-graded and for each $g \in G$
the ring $S_g S_{g^{-1}}$ is unital. Take $g \in G$.
Let $\epsilon_g$ denote the multiplicative identity element of $S_g S_{g^{-1}}$.
Take $s \in S_g$. Using that $S$ is symmetrically $G$-graded, 
there exist $n \in \mathbb{N}$, $a_i,c_i \in S_g$ and $b_i \in S_{g^{-1}}$,
for $i \in \{1,\ldots,n\}$, such that $s = \sum_{i=1}^n a_i b_i c_i$.
Take $i \in \{ 1, \ldots , n \}$.
Since $a_i b_i \in S_g S_{g^{-1}}$ we get that
$\epsilon_g a_i b_i = a_i b_i$ and thus
$\epsilon_g s = \sum_{i=1}^n \epsilon_g a_i b_i c_i = 
\sum_{i=1}^n a_i b_i c_i = s$.
Similarly, $s \epsilon_{g^{-1}} = s$.

(iii)$\Rightarrow$(i): 
This is immediate.
\end{proof}

\begin{remark}\label{remarkid}
If $S$ is epsilon-strongly $G$-graded, then 
the element $\epsilon_g$ from Proposition~\ref{equivalent}(iii) is a multiplicative 
identity element of the $S_e$-ideal $S_g S_{g^{-1}}$ and $\epsilon_g \in Z( S_e )$.
Moreover, $\epsilon_e$ is a multiplicative identity element of both $S_e$ and $S$,
making $S$ a unital ring.
\end{remark}

\begin{prop}\label{propstronglygraded}
If $S$ is epsilon-strongly $G$-graded, then the following assertions are equivalent: 

\begin{itemize}

\item[(i)] $S$ is strongly $G$-graded;

\item[(ii)] for each $g \in G$ the equality $S_g S_{g^{-1}} = S_e$ holds;

\item[(iii)] for each $g \in G$ the equality $\epsilon_g = 1$ holds.

\end{itemize}
\end{prop}

\begin{proof}
The equivalence (i)$\Leftrightarrow$(ii) is \cite[Proposition 1.1.1(3)]{NVO2004} and the 
equivalence (ii)$\Leftrightarrow$(iii) follows from Remark \ref{remarkid}.
\end{proof}

Now we introduce the following weakening of epsilon-strongly graded rings.

\begin{defn}\label{def:nearlyepsilon}
The ring $S$ is said to be 
{\it nearly epsilon-strongly $G$-graded}
(or {\it nearly epsilon-strongly graded by $G$})
if for each $g \in G$ the $S_g S_{g^{-1}}$--$S_{g^{-1}} S_g$-bimodule $S_g$ is s-unital.
\end{defn}

Now we show an ''s-unital version'' of Proposition~\ref{equivalent}.

\begin{prop}\label{sequivalent}
The following assertions are equivalent:
\begin{itemize}

\item[(i)] The ring $S$ is nearly epsilon-strongly $G$-graded;

\item[(ii)] The ring $S$ is symmetrically $G$-graded and for each $g \in G$
the ring $S_g S_{g^{-1}}$ is s-unital;

\item[(iii)] For each $g \in G$ and each $s \in S_g$ 
there exist $\epsilon_g(s) \in S_g S_{g^{-1}}$ and $\epsilon_g'(s) \in S_{g^{-1}} S_g$
such that the equalities $\epsilon_g(s) s = s \epsilon_g'(s) = s$ hold.

\end{itemize}
If {\rm (i)}, {\rm (ii)} and {\rm (iii)} are satisfied, then for all $g \in G$, the $S_e$-ideal $S_g S_{g^{-1}}$ is generated by 
\begin{displaymath}
	\{ \epsilon_g(s) \mid s \in S_g \} \cup \{ \epsilon_{g^{-1}}'(s) \mid s \in S_g \}.
\end{displaymath}
\end{prop}

\begin{proof}
(i)$\Rightarrow$(ii): 
Suppose that $S$ is nearly epsilon-strongly $G$-graded.
The symmetrical part is shown in the same way as for Proposition~\ref{equivalent}.
Take $g\in G$.
By Definition~\ref{def:nearlyepsilon}, 
$S_g$ is an s-unital left $S_g S_{g^{-1}}$-module and
$S_{g^{-1}}$ is an s-unital right $S_g S_{g^{-1}}$-module.
Thus, the ring $S_g S_{g^{-1}}$ is s-unital.

(ii)$\Rightarrow$(iii):
Suppose that the ring $S$ is symmetrically $G$-graded and for each $g \in G$
the ring $S_g S_{g^{-1}}$ is s-unital. Take $g \in G$ and $s \in S_g$.
Using that $S$ is symmetrically $G$-graded, there exist $n \in \mathbb{N}$, 
$a_i,c_i \in S_g$ and $b_i \in S_{g^{-1}}$,
for $i \in \{1,\ldots,n\}$, such that $s = \sum_{i=1}^n a_i b_i c_i$.
Since $a_i b_i \in S_g S_{g^{-1}}$ and $b_i c_i \in S_{g^{-1}} S_g$,
for $i \in \{1,\ldots,n\}$, it follows from s-unitality and Remark~\ref{tominaga}
that there exist $\epsilon_g(s) \in S_g S_{g^{-1}}$ and $\epsilon_g'(s) \in S_{g^{-1}} S_g$
such that the equalities $\epsilon_g(s) a_i b_i = a_i b_i$ and
$b_i c_i \epsilon_g'(s) = b_i c_i$ hold, for $i \in \{1, \ldots, n\}$.
Thus, $\epsilon_g(s) s = s \epsilon_g'(s) = s$.

(iii)$\Rightarrow$(i): 
This is immediate.
\end{proof}

Following Cohen and Montgomery \cite{CM1984}, and 
Lundstr\"{o}m and \"{O}inert \cite{OL2012}, we make the following definition.

\begin{defn}\label{def:nondegenerate}
The gradation on $S$ is said to be \emph{right non-degenerate} 
(resp. \emph{left non-degenerate}) if for each $g \in G$
and each non-zero $s \in S_g$, the relation $s S_{g^{-1}} \neq \{ 0 \}$
(resp. $S_{g^{-1}} s \neq \{ 0 \})$ holds.
If the gradation on $S$ is right (resp. left) non-degenerate, 
then $S$ is said to be a \emph{right (resp. left) non-degenerately $G$-graded ring}.
\end{defn}

Now we define a
strengthening
of non-degenerate gradations.

\begin{defn}\label{def:weaklynondegenerate}
The gradation on $S$ is said to be
\emph{strongly right non-degenerate}
(resp. \emph{strongly left non-degenerate})
if for each $g \in G$ the left $S_g S_{g^{-1}}$-module
(right $S_{g^{-1}} S_g$-module) $S_g$ is torsion-free.
If the gradation on $S$ is strongly right (resp. left) non-degenerate, 
then $S$ is said to be a \emph{strongly right (resp. left) non-degenerately $G$-graded ring}.
\end{defn}

\begin{prop}\label{epsilonweakly}
If $S$ is nearly epsilon-strongly $G$-graded,
then $S$ is symmetrically $G$-graded,
strongly right non-degenerately $G$-graded
and
strongly left non-degenerately $G$-graded. 
\end{prop}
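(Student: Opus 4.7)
The plan is to reduce everything to the characterization already established in Proposition~\ref{sequivalent}, and then observe that s-unitality immediately forces torsionfreeness.

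First, the symmetrically graded part is free: the implication (i)$\Rightarrow$(ii) of Proposition~\ref{sequivalent} tells us that any nearly epsilon-strongly $G$-graded ring is symmetrically $G$-graded, so no further work is required for that conjunct of the conclusion.

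Next, I would handle strong right non-degeneracy. Fix $g \in G$ and a non-zero $s \in S_g$. By Proposition~\ref{sequivalent}(iii), there exists $\epsilon_g(s) \in S_g S_{g^{-1}}$ such that $\epsilon_g(s) \cdot s = s$. Since $s \neq 0$, this exhibits a non-zero element of $(S_g S_{g^{-1}}) \cdot s$, so $(S_g S_{g^{-1}}) \cdot s \neq \{ 0 \}$. By Definition~\ref{def:weaklynondegenerate}, the left $S_g S_{g^{-1}}$-module $S_g$ is therefore torsionfree, i.e. $S$ is strongly right non-degenerately $G$-graded.

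The strongly left non-degenerate assertion follows by the mirror argument: for non-zero $s \in S_g$, Proposition~\ref{sequivalent}(iii) supplies $\epsilon_g'(s) \in S_{g^{-1}} S_g$ with $s \cdot \epsilon_g'(s) = s \neq 0$, so $s \cdot (S_{g^{-1}} S_g) \neq \{ 0 \}$, which is exactly the torsionfreeness of $S_g$ as a right $S_{g^{-1}} S_g$-module. There is no real obstacle here — the proof is essentially a repackaging of the chain of implications \eqref{implications} from Section~\ref{sunitalmodules} in the bimodule setting, the substantive content having already been absorbed into Proposition~\ref{sequivalent}.
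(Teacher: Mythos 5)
Your proposal is correct and takes essentially the same route as the paper: the paper's proof is a one-line appeal to the chain of implications \eqref{implications} (s-unital $\Rightarrow$ torsionfree) together with Proposition~\ref{sequivalent}, and your argument simply unpacks that implication explicitly via the elements $\epsilon_g(s)$ and $\epsilon_g'(s)$ from Proposition~\ref{sequivalent}(iii). No gaps.
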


\begin{proof}
This follows immediately from \eqref{implications} and Proposition~\ref{sequivalent}(ii).
\end{proof}

\begin{defn}
An involution $(\cdot)^* : S \to S$ is said to be \emph{anti-graded} if for all
$g \in G$ the equality $(S_g)^* = S_{g^{-1}}$ holds.
\end{defn}

\begin{prop}\label{propgradedinvolution}
If $S$ is equipped with an anti-graded involution, 
then the following assertions are equivalent:
\begin{itemize}

\item[(i)] $S$ is epsilon-strongly $G$-graded;

\item[(ii)] for each $g \in G$, there exists $\epsilon_g \in S_g S_{g^{-1}}$
such that $\epsilon_g^* = \epsilon_g$ and for all $s \in S_g$ 
the equality $\epsilon_g s = s$ holds;

\item[(iii)] for each $g \in G$, there exists $\epsilon_g \in S_g S_{g^{-1}}$
such that $\epsilon_g^* = \epsilon_g$ and for all $s \in S_g$ 
the equality $s \epsilon_{g^{-1}} = s$ holds.

\end{itemize}
\end{prop}

\begin{proof}
(i)$\Rightarrow$(ii):
Suppose that $S$ is epsilon-strongly $G$-graded.
Take $g \in G$ and an element $\epsilon_g \in S_g S_{g^{-1}}$
satisfying the conditions in Proposition \ref{equivalent}(iii).
Then $\epsilon_g^* \in (S_g S_{g^{-1}})^* = (S_{g^{-1}})^* S_g^* = S_g S_{g^{-1}}$
and thus, from Remark \ref{remarkid}, it follows that
$\epsilon_g^* = \epsilon_g \epsilon_g^* = (\epsilon_g^*)^* \epsilon_g^* =
( \epsilon_g \epsilon_g^* )^* = ( \epsilon_g^* )^* = \epsilon_g$.
Thus, (ii) holds.

(ii)$\Rightarrow$(i): Suppose that (ii) holds 
for some elements $\epsilon_g \in S_g S_{g^{-1}}$, for $g \in G$. 
Take $g \in G$ and $s \in S_g$. From the fact that $s^* \in S_{g^{-1}}$
it follows that 
$s \epsilon_{g^{-1}}  = 
( \epsilon_{g^{-1}}^* s^* )^* =
( \epsilon_{g^{-1}} s^* )^* =
(s^*)^* = s$.

The proof of the equivalence (i)$\Leftrightarrow$(iii) is analogous 
and is therefore left to the reader.
\end{proof}

\begin{prop}\label{propnearlyinvolution}
Suppose that $S$ is equipped with an anti-graded involution.
\begin{itemize}

\item[(a)] If for each $g \in G$ and all $s \in S_g$,
there exists $\epsilon_g(s) \in S_g S_{g^{-1}}$
such that $\epsilon_g(s)^* = \epsilon_g(s)$ and 
the equality $\epsilon_g(s) s = s$ holds,
then $S$ is nearly epsilon-strongly $G$-graded.
In that case, for each $g \in G$, the $S_e$-ideal $S_g S_{g^{-1}}$ is generated by 
$\{ \epsilon_g(s) \mid s \in S_g \}.$

\item[(b)] If for each $g \in G$ and all $s \in S_g$, 
there exists $\epsilon_g'(s) \in S_{g^{-1}} S_g$
such that $\epsilon_g'(s)^* = \epsilon_g'(s)$ and 
the equality $s \epsilon_g'(s)= s$ holds,
then $S$ is nearly epsilon-strongly $G$-graded.
In that case, for each $g \in G$, the $S_e$-ideal $S_g S_{g^{-1}}$ is generated by 
$\{ \epsilon_{g^{-1}}'(s) \mid s \in S_g \}.$

\end{itemize}
\end{prop}

\begin{proof}
Analogous to the proofs of (ii)$\Rightarrow$(i) resp. (iii)$\Rightarrow$(i)
in Proposition \ref{propgradedinvolution}. 
\end{proof}

\section{Leavitt path algebras}\label{sec:leavittpathalgebras}

In this section, we accomodate the Leavitt path algebra 
framework that we need; in particular standard $G$-gradations
on Leavitt path algebras, for an arbitrary group $G$. 
We prove $G$-graded versions of Theorem~\ref{maintheoremfinite}
and Theorem~\ref{maintheorem} (see Theorem~\ref{finitegraphepsilonstrongly}
and Theorem~\ref{graphepsilonstrongly}).
At the end of this section we
establish criteria
which ensure that a Leavitt path algebra is Frobenius over its identity component
when the group $G$ is finite (see Theorem~\ref{maintheoremfrobenius}).

Let $R$ be an associative unital ring and let
$E = (E^0,E^1,r,s)$ be a directed graph.
Recall that 
$r$ (range) and $s$ (source) are maps $E^1 \to E^0$. The elements of $E^0$ are called \emph{vertices} 
and the elements of $E^1$ are called \emph{edges}. 
A vertex $v$ for which $s^{-1}(v)$ is empty is called a \emph{sink}.
A vertex $v$ for which $r^{-1}(v)$ is empty is called a \emph{source}.
If $s^{-1}(v)$ is a finite set for every $v \in E^0$,
then $E$ is called \emph{row-finite}.
If both $E^0$ and $E^1$ are finite sets, then we say that $E$ is \emph{finite}.
A \emph{path} $\mu$ in $E$ is a sequence of edges 
$\mu = \mu_1 \ldots \mu_n$ such that $r(\mu_i)=s(\mu_{i+1})$ 
for $i\in \{1,\ldots,n-1\}$. In such a case, $s(\mu):=s(\mu_1)$ 
is the \emph{source} of $\mu$, $r(\mu):=r(\mu_n)$ is the \emph{range} 
of $\mu$, and $l(\mu):=n$ is the \emph{length} of $\mu$.
For any vertex $v \in E^0$ we put $s(v):=v$ and $r(v):=v$.
The elements of $E^1$ are called \emph{real edges}, while for $f\in E^1$
we call $f^*$ a \emph{ghost edge}.
The set $\{f^* \mid f \in E^1\}$ will be denoted by $(E^1)^*$.
We let $r(f^*)$ denote $s(f)$, and we let $s(f^*)$ denote $r(f)$.
Following Hazrat \cite{H2013A} we make the following definition.

\begin{defn}\label{def:LPA}
The \emph{Leavitt path algebra of $E$ with coefficients in $R$}, denoted by $L_R(E)$,
is the algebra generated by the sets
$\{v \mid v\in E^0\}$, $\{f \mid f\in E^1\}$ and $\{f^* \mid f\in E^1\}$
with the coefficients in $R$,
subject to the relations:
\begin{enumerate}
	\item $uv = \delta_{u,v} v$ for all $u,v \in E^0$;
	\item $s(f)f=fr(f)=f$ and $r(f)f^*=f^*s(f)=f^*$, for all $f\in E^1$;
	\item $f^*f'=\delta_{f,f'} r(f)$, for all $f,f'\in E^1$;
	\item $\sum_{f\in E^1, s(f)=v} ff^* = v$, for every $v\in E^0$ for which $s^{-1}(v)$ is non-empty and finite.
\end{enumerate}
Here the ring $R$ commutes with the generators.
\end{defn}

There are many ways to assign a group gradation to $L_R(E)$.
Indeed, let $G$ be an arbitrary group.
Put $\degree(v)=e$, for each $v\in E^0$.
For each $f\in E^1$, choose some $g\in G$ and put
$\deg(f) = g$ and $\deg(f^*) = g^{-1}$.
By assigning degrees to the generators of the free algebra $F_R(E)=R\langle v,f,f^* \mid v\in E^0, f\in E^1 \rangle$ in this way, we are ensured
that the ideal coming from relations (1)--(4) in Definition~\ref{def:LPA}
is \emph{graded}, i.e. homogeneous.
Using this it is easy to see that the natural $G$-gradation on $F_R(E)$ carries over to a $G$-gradation on the quotient algebra
$L_R(E)$.
We will refer to such a gradation as \emph{a standard $G$-gradation on $L_R(E)$}.
If $\mu=\mu_1\ldots \mu_n$ is a path, then $\mu$ can be viewed as an element in $L_R(E)$, and we denote by $\mu^*$ the element
$\mu_n^*\cdots \mu_1^*$ of $L_R(E)$.
This gives rise to an anti-graded involution on $L_R(E)$.
For $n\geq 1$ we define $E^n$ to be the set of paths of length $n$,
and let $E^* = \cup_{n\geq 0} E^n$ be the set of all paths (and vertices).
It is clear that the map $\degree : E^0 \cup E^1 \cup (E^1)^* \to G$, described above,
can be extended to a map
$\degree : E^* \cup \{\mu^* \mid \mu \in E^* \} \to G$.
One important instance of such a gradation is coming from the case $G=\Z$, when we put
$\deg(v) = 0$, for $v \in E^0$, 
$\deg(f) = 1$ and $\deg(f^*)=-1$, for $f \in E^1$.
We will refer to this gradation as the \emph{canonical $\Z$-gradation on $L_R(E)$}.\\

{\bf Assumption:} Unless otherwise stated, throughout the rest of this section
$E$ will denote an arbitrary directed graph, $R$ will denote
an arbitrary associative unital ring, $G$ will denote an arbitrary group
and we will assume that $L_R(E)$ is equipped with an arbitrary standard $G$-gradation, $S=L_R(E)= \oplus_{g\in G} S_g$.\\

We wish to introduce a partial order related to Leavitt
path algebras. To do that we need some concepts
and a well-known result which we include for the convenience of the reader. 

\begin{defn}
Let $\leq$ be a relation on a set $X$.
Recall that $\leq$ is called a {\it preorder} if
it is reflexive and transitive.
If $\leq$ is a preorder, then it is called
a {\it partial order} if it is antisymmetric.
\end{defn}

\begin{prop}\label{quotient}
Suppose that $\leq$ is a preorder on a set $X$.
If we, for any $x,y \in X$, put $x \sim y$,
whenever $x \leq y$ and $y \leq x$, then
$\sim$ is an equivalence relation on $X$
which makes the quotient relation $\preceq$ on $X / \! \sim$,
induced by $\leq$, a partial order.
\end{prop}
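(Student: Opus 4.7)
The plan is to first verify that $\sim$ is an equivalence relation, then to define $\preceq$ on $X/\sim$ in the obvious way and check that it is well-defined and satisfies the three axioms of a partial order. None of this is deep, so the proposal amounts to organising three routine verifications.

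First I would check that $\sim$ is an equivalence relation. Reflexivity of $\sim$ follows directly from reflexivity of $\leq$, and symmetry is built into the very definition of $\sim$. For transitivity, if $x \sim y$ and $y \sim z$, then the four inequalities $x \leq y$, $y \leq x$, $y \leq z$, $z \leq y$ combine via transitivity of $\leq$ to give $x \leq z$ and $z \leq x$, so $x \sim z$.

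Next I would define the quotient relation by declaring $[x] \preceq [y]$ if and only if $x \leq y$ for some (equivalently, any) representatives. The only thing to verify here is that this is independent of the chosen representatives. If $x \sim x'$ and $y \sim y'$, and $x \leq y$, then $x' \leq x \leq y \leq y'$, so $x' \leq y'$. This is the one place where one must be careful, and it is really the only content beyond unwinding definitions; I expect this to be the main (and very mild) obstacle.

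Finally I would verify the three axioms of a partial order for $\preceq$. Reflexivity of $\preceq$ is immediate from reflexivity of $\leq$, and transitivity of $\preceq$ follows at once from transitivity of $\leq$. For antisymmetry, if $[x] \preceq [y]$ and $[y] \preceq [x]$, then $x \leq y$ and $y \leq x$, which by definition means $x \sim y$, i.e., $[x] = [y]$. This completes the plan.
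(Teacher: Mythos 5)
Your proposal is correct and follows essentially the same route as the paper's proof: verify that $\sim$ is an equivalence relation, check that $\preceq$ is well-defined on representatives, and then confirm reflexivity, antisymmetry and transitivity. The only cosmetic difference is in the antisymmetry step, where you invoke the standard fact that $x \sim y$ implies $[x]=[y]$, while the paper re-derives this equality of classes by a mutual-inclusion argument; the content is the same.
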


\begin{proof}
This is well-known and straightforward to check.
\end{proof}

\begin{defn}\label{def:setXg}
Let $X$ denote the set of all formal expressions
of the form $\alpha \beta^*$ where $\alpha,\beta \in E^*$
and $r( \alpha ) = r ( \beta )$.
Take $g \in G$ and put 
$X_g = \{ x \in X \mid \degree(x) = g \}.$
Suppose that $\alpha,\beta,\gamma,\delta \in E^*$
satisfy $\alpha \beta^* , \gamma \delta^* \in X_g$.
We put $\alpha \beta^* \leq \gamma \delta^*$ if
$\alpha$ is an initial subpath of $\gamma$.
We put $\alpha \beta^* \sim \gamma \delta^*$
whenever $\alpha \beta^* \leq \gamma \delta^*$
and $\gamma \delta^* \leq \alpha \beta^*$, that is
if $\alpha = \gamma$.
\end{defn}

\begin{remark}\label{rem:XandXg}
(a) In the above definition of $X$, it is fully possible to let $\alpha$ or $\beta$ be a vertex.

(b) For each $g\in G$, the homogeneous component $S_g$
in the $G$-gradation of $L_R(E)$
is the $R$-linear span of $X_g$.
\end{remark}

\begin{prop}\label{proppartialorder}
Let $g \in G$ be arbitrary. The following assertions hold:
\begin{itemize}

\item[(a)] The relation $\leq$ is a preorder on $X_g$.

\item[(b)] The relation $\sim$ is an equivalence relation on $X_g$.

\item[(c)] The quotient relation $\preceq$ on $X_g / \! \sim$
induced by $\leq$ is a partial order.

\end{itemize}
\end{prop}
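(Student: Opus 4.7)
The plan is to verify part (a) directly from the definition, and then obtain parts (b) and (c) as immediate consequences of the already-proved Proposition~\ref{quotient}.

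First I would prove (a). Fix $g\in G$ and take an arbitrary element $\alpha\beta^*\in X_g$. Since every path is trivially an initial subpath of itself, we have $\alpha\beta^*\leq\alpha\beta^*$, so $\leq$ is reflexive. For transitivity, suppose $\alpha\beta^*,\gamma\delta^*,\mu\nu^*\in X_g$ satisfy $\alpha\beta^*\leq\gamma\delta^*$ and $\gamma\delta^*\leq\mu\nu^*$. By Definition~\ref{def:setXg} this means that $\alpha$ is an initial subpath of $\gamma$ and that $\gamma$ is an initial subpath of $\mu$. Concatenating the two initial-subpath decompositions shows that $\alpha$ is an initial subpath of $\mu$, so $\alpha\beta^*\leq\mu\nu^*$, which proves transitivity. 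Hence $\leq$ is a preorder on $X_g$.

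Next, (b) and (c) follow at once from Proposition~\ref{quotient} applied to the preorder on $X=X_g$: since $\leq$ is a preorder, the relation defined by $x\sim y$ iff $x\leq y$ and $y\leq x$ is an equivalence relation, and the induced quotient relation $\preceq$ on $X_g/\sim$ is a partial order. It remains only to observe that the equivalence relation produced by Proposition~\ref{quotient} coincides with the one introduced in Definition~\ref{def:setXg}: indeed, $\alpha\beta^*\leq\gamma\delta^*$ and $\gamma\delta^*\leq\alpha\beta^*$ simultaneously means that $\alpha$ is an initial subpath of $\gamma$ and $\gamma$ is an initial subpath of $\alpha$, which (since both are paths of finite length in $E^*$) is equivalent to $\alpha=\gamma$, exactly as stated in Definition~\ref{def:setXg}.

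There is no real obstacle in this argument; the only point that could conceivably require a line is the observation that the ``initial subpath'' relation is antisymmetric on $E^*$ itself, which is needed for the identification of the equivalence $\sim$ with the condition $\alpha=\gamma$. That fact is immediate from the definition of $E^*$ as a set of finite sequences of edges, so the entire proposition reduces to a short bookkeeping argument plus an appeal to Proposition~\ref{quotient}.
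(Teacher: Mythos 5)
Your proof is correct and follows essentially the same route as the paper: verify reflexivity and transitivity of $\leq$ directly from the initial-subpath relation, and then invoke Proposition~\ref{quotient} for parts (b) and (c). Your extra remark identifying the $\sim$ of Definition~\ref{def:setXg} with the equivalence induced by the preorder (via antisymmetry of the initial-subpath relation on finite paths) is a small point the paper dismisses as ``clear,'' and it is handled correctly.
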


\begin{proof}
(a) Take $\alpha_i,\beta_i \in E^*$, 
such that $\alpha_i \beta_i^* \in X_g$, for $i \in \{1,2,3\}$.
It is clear that $\alpha_1 \beta_1^* \leq \alpha_1 \beta_1^*$,
since $\alpha_1$ is an initial subpath of itself.
Thus, $\leq$ is reflexive.
Now we show that $\leq$ is transitive.
Suppose that $\alpha_1 \beta_1^* \leq \alpha_2 \beta_2^*$ 
and $\alpha_2 \beta_2^* \leq \alpha_3 \beta_3^*$.
Then $\alpha_1$ must be an initial subpath of $\alpha_3$.
Thus, $\alpha_1 \beta_1^* \leq \alpha_3 \beta_3^*$.

(b) This is clear.

(c) This follows from Proposition~\ref{quotient}.
\end{proof}

\begin{prop}
Let $g\in G$ be arbitrary.
The map $\Ng : X_g /\! \sim \, \ni [x] \mapsto \Ng(x):= x x^* \in L_R(E)_e$ is well-defined.
\end{prop}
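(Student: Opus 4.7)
The plan is to verify two things: (i) for each $x \in X_g$, the product $xx^*$ lies in $L_R(E)_e = S_e$; and (ii) if $x \sim y$, then $xx^* = yy^*$. The first is a degree computation and the second reduces to a standard simplification in $L_R(E)$.

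For (i), write $x = \alpha\beta^*$ with $\alpha,\beta \in E^*$ and $r(\alpha) = r(\beta)$. Since the standard $G$-gradation satisfies $\degree(f^*) = \degree(f)^{-1}$ and extends multiplicatively to $E^* \cup \{ \mu^* \mid \mu \in E^*\}$, we have $x^* = \beta\alpha^*$ with $\degree(x^*) = \degree(\beta)\degree(\alpha)^{-1} = (\degree(\alpha)\degree(\beta)^{-1})^{-1} = g^{-1}$. Hence $\degree(xx^*) = g g^{-1} = e$, so $xx^* \in S_e$.

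For (ii), the crucial ingredient is the identity $\gamma^*\gamma = r(\gamma)$ valid for every $\gamma \in E^*$. If $\gamma$ is a vertex this is just relation (1) of Definition~\ref{def:LPA}; if $\gamma = f_1 \cdots f_n$ has positive length, an easy induction on $n$ establishes it, using relation (3) to collapse $f_1^* f_1 = r(f_1)$ and relation (2) in the form $r(f_1)f_2 = s(f_2)f_2 = f_2$ to let the reduction propagate. Now, by the definition of $\sim$ on $X_g$, writing $x \sim y$ amounts to $x = \alpha\beta^*$ and $y = \alpha\delta^*$ for a common initial path $\alpha$ and paths $\beta,\delta \in E^*$ with $r(\beta) = r(\delta) = r(\alpha)$. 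Applying the identity,
\[
xx^* \;=\; \alpha\beta^*\beta\alpha^* \;=\; \alpha\, r(\beta)\, \alpha^* \;=\; \alpha\, r(\alpha)\, \alpha^* \;=\; \alpha\alpha^*,
\]
where the last equality uses relation (2) to absorb $r(\alpha)$ (this also covers the edge case when $\alpha$ is a vertex). The same calculation yields $yy^* = \alpha\alpha^*$, and therefore $xx^* = yy^*$, so $n$ is well-defined. No substantive obstacle appears; the entire argument pivots on the collapse identity $\gamma^*\gamma = r(\gamma)$, and the bookkeeping of Definition~\ref{def:setXg} ensures that $\alpha$ is literally the same path on both sides.
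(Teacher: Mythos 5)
Your proposal is correct and follows essentially the same route as the paper: reduce $x\sim y$ to $x=\alpha\beta^*$, $y=\alpha\gamma^*$ and collapse via $\beta^*\beta=r(\beta)$ to get $xx^*=\alpha\alpha^*=yy^*$. You merely spell out the collapse identity and the degree check $xx^*\in S_e$ more explicitly than the paper does.
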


\begin{proof}
Suppose that $[x] = [y]$. Then $x \sim y$ which implies
that $x \leq y$ and $y \leq x$.
Then $x = \alpha \beta^*$ and $y = \alpha \gamma^*$
for some $\alpha,\beta,\gamma \in E^*$.
Thus, $\Ng(x) = x x^* = \alpha \beta^* \beta \alpha^* = \alpha \alpha^* =
\alpha \gamma^* \gamma \alpha^* = y y^* = \Ng(y)$.
\end{proof}

\begin{lemma}\label{lemmasubpath}
If $\alpha,\beta \in E^*$ are chosen so that
$\alpha^* \beta \neq 0$ in $L_R(E)$, then
$\alpha$ is an initial subpath of $\beta$
or vice versa.
\end{lemma}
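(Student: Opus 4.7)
The approach is to directly simplify $\alpha^*\beta$ using the defining relations of $L_R(E)$: relation~(3) to reduce interior pairs $\alpha_i^*\beta_i$ and relation~(2) to absorb the vertices produced. Each application of relation~(3) forces an equality between the corresponding real edges of $\alpha$ and $\beta$, so iterating until one of the paths is exhausted yields the initial-subpath conclusion.

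Write $\alpha = \alpha_1\cdots\alpha_m$ and $\beta=\beta_1\cdots\beta_n$ with $\alpha_i,\beta_j\in E^1$, allowing $m=0$ to signify $\alpha\in E^0$ (similarly for $n$). Without loss of generality I would assume $m\leq n$ and aim to show that $\alpha$ is an initial subpath of $\beta$, proceeding by induction on $m$. The base case $m=0$, where $\alpha = v\in E^0$, is handled by relations~(1) and~(2): $\alpha^*\beta = v\beta$ equals $\beta$ if $v = s(\beta)$ and vanishes otherwise, so the hypothesis $\alpha^*\beta \neq 0$ forces $v = s(\beta)$, which gives $\alpha$ as a length-zero initial subpath of $\beta$.

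For the inductive step $m\geq 1$, relation~(3) gives
\[
\alpha^*\beta = \alpha_m^*\cdots\alpha_2^*(\alpha_1^*\beta_1)\beta_2\cdots\beta_n = \delta_{\alpha_1,\beta_1}\,\alpha_m^*\cdots\alpha_2^*\,r(\alpha_1)\,\beta_2\cdots\beta_n,
\]
so nonvanishing forces $\alpha_1=\beta_1$. Since $r(\alpha_1) = s(\alpha_2) = s(\beta_2)$ (read as the appropriate range vertex when one side has no further edges), relation~(2) absorbs this vertex, reducing the product to $(\alpha_2\cdots\alpha_m)^*(\beta_2\cdots\beta_n)$, where a factor with no edges remaining is understood as a vertex. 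The inductive hypothesis applied to this shorter pair, combined with $\alpha_1=\beta_1$, yields that $\alpha$ is an initial subpath of $\beta$.

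The main obstacle is purely bookkeeping the degenerate boundary cases $m\in\{1,n\}$, where the reduction terminates in a vertex rather than a bona fide path product; these fit uniformly into the induction once one allows the residual factors to be vertices, and no deeper argument is required.
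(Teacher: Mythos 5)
Your proof is correct and follows essentially the same route as the paper's: repeatedly applying relation~(3) to the innermost pair $\alpha_1^*\beta_1$ and absorbing the resulting range vertex via relation~(2), so that non-vanishing forces the edges of $\alpha$ and $\beta$ to agree one by one until the shorter path is exhausted. The paper performs the same cancellation in a single display (split into the cases $m\leq n$ and $m>n$) rather than as a formal induction, but the content is identical.
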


\begin{proof}
Put $m=l( \alpha )$ and $n=l( \beta )$.
Suppose that $\alpha = f_1 \ldots f_m$ and
$\beta = f_1' \ldots f_n'$.

Case 1: $m \leq n$. Then
\begin{displaymath}
	0 \neq \alpha^* \beta = 
f_m^* \cdots f_1^* f_1' \cdots f_n' =
\delta_{f_1 , f_1'} \cdots \delta_{f_m , f_m'} f_{m+1}' \cdots f_n'.
\end{displaymath}
This implies that $f_i = f_i'$ for $i \in \{1,\ldots,m\}$.
Thus, $\alpha$ is an initial subpath of $\beta$.

Case 2: $m > n$. Then
\begin{displaymath}
	0 \neq \alpha^* \beta = 
f_m^* \cdots f_1^* f_1' \cdots f_n' =
\delta_{f_1 , f_1'} \cdots \delta_{f_n , f_n'} f_m^* \cdots f_{n+1}^*.
\end{displaymath}
Hence, $f_i = f_i'$ for $i \in \{1,\ldots,n\}$.
Thus, $\beta$ is an initial subpath of $\alpha$.
\end{proof}

\begin{prop}\label{propsubpath}
Let $g \in G$ and $x,y \in X_g$ be arbitrary.
The following assertions hold:
\begin{itemize}

\item[(a)] If $[x] \preceq [y]$, then $\Ng(x) y = y$.

\item[(b)] If $[x] \npreceq [y]$ and $[y] \npreceq [x]$,
then $\Ng(x) y = 0$.

\end{itemize}
\end{prop}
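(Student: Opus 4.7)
The plan is to unwind the definitions so that both parts reduce to a single computation in $L_R(E)$ governed by the initial-subpath relation, and then invoke Lemma~\ref{lemmasubpath} for the vanishing case.

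First, I would fix representatives $x = \alpha\beta^{*}$ and $y = \gamma\delta^{*}$ with $\alpha,\beta,\gamma,\delta \in E^{*}$ and $r(\alpha)=r(\beta)$, $r(\gamma)=r(\delta)$. Relations (2) and (3) of Definition~\ref{def:LPA} give $\beta^{*}\beta = r(\beta)$, hence
\[
n(x) \;=\; xx^{*} \;=\; \alpha\beta^{*}\beta\alpha^{*} \;=\; \alpha\,r(\beta)\,\alpha^{*} \;=\; \alpha\alpha^{*}.
\]
Consequently $n(x)y = \alpha(\alpha^{*}\gamma)\delta^{*}$, so everything turns on understanding the factor $\alpha^{*}\gamma$.

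For part (a), the hypothesis $[x]\preceq[y]$ unpacks (via Definition~\ref{def:setXg} and the definition of $\preceq$) to the statement that $\alpha$ is an initial subpath of $\gamma$. I would therefore write $\gamma = \alpha\gamma'$ with $\gamma' \in E^{*}$ (where $\gamma'$ may be the vertex $r(\alpha)$). Using $\alpha^{*}\alpha = r(\alpha)$ followed by relation (2) (or, if $\gamma'$ is itself a vertex, relation (1)), one obtains $\alpha^{*}\gamma = r(\alpha)\gamma' = \gamma'$, so
\[
n(x)y \;=\; \alpha\gamma'\delta^{*} \;=\; \gamma\delta^{*} \;=\; y.
\]

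For part (b), the two failed relations $[x]\npreceq[y]$ and $[y]\npreceq[x]$ translate to the assertion that neither of $\alpha,\gamma$ is an initial subpath of the other. The contrapositive of Lemma~\ref{lemmasubpath} then yields $\alpha^{*}\gamma = 0$, whence $n(x)y = \alpha\cdot 0\cdot\delta^{*} = 0$. The only minor technicality is the bookkeeping when one (or more) of $\alpha,\beta,\gamma,\delta$ has length zero, i.e.\ is a vertex; these degenerate cases are absorbed uniformly by the conventions $s(v)=v=r(v)$, $v = v^{*}$, and relation (1), and I do not expect any genuine obstacle in the argument.
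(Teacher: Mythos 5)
Your proof is correct and follows essentially the same route as the paper: reduce $n(x)y$ to $\alpha(\alpha^{*}\gamma)\delta^{*}$, use the initial-subpath characterization of $\preceq$ for part (a), and apply the contrapositive of Lemma~\ref{lemmasubpath} for part (b). The explicit attention to the degenerate vertex cases is a small addition beyond what the paper writes out, but the argument is the same.
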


\begin{proof}
Put $x = \alpha \beta^*$ and $y = \gamma \delta^*$.

(a) Suppose that $[x] \preceq [y]$.
Then $\alpha$ is an initial subpath of $\gamma$.
Hence, $\gamma = \alpha \alpha'$ for some 
$\alpha' \in E^*$ such that $r(\alpha)=s(\alpha')$. Therefore, 
$\Ng(x) y = 
\alpha \beta^* \beta \alpha^* \gamma \delta^*  =
\alpha \alpha^* \alpha \alpha' \delta^* =
\alpha \alpha' \delta^* =
\gamma \delta^* = y$.

(b) From Lemma~\ref{lemmasubpath}, we get that $\Ng(x) y = 
\alpha \beta^* \beta \alpha^* \gamma \delta^*  =
\alpha \alpha^* \gamma \delta^* = \alpha 0 \delta^* = 0$.
\end{proof}

\begin{defn}
Suppose that $E$ is a finite directed graph and let $g \in G$.
Since $E$ is finite, we can, on account of Proposition~\ref{proppartialorder},
choose $n_g \in \mathbb{N}$ and $m_1,\ldots,m_{n_g} \in X_g$
such that $\{ [m_1] , \ldots , [m_{n_g}] \}$ equals the  
set of minimal elements of $X_g/ \! \sim$, with respect to $\preceq$.
We define
\begin{displaymath}
	\epsilon_g = \sum_{i=1}^{n_g} \Ng(m_i)
\end{displaymath}
in $L_R(E)$. 
Notice that we will always get $\epsilon_e = \sum_{v\in E^0} v$.
\end{defn}

The following result allows us establish Theorem~\ref{maintheoremfinite}, in particular.

\begin{theorem}\label{finitegraphepsilonstrongly}
If $E$ is a finite directed graph and $R$ is an associative
unital ring, then $L_R(E)$ is epsilon-strongly $G$-graded.
\end{theorem}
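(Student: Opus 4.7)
The plan is to invoke the implication (iii)$\Rightarrow$(i) of Proposition~\ref{equivalent}, verifying that the element $\epsilon_g$ defined above satisfies the conditions required there. The containment $\epsilon_g \in S_g S_{g^{-1}}$ is immediate: if $m_i = \alpha_i \beta_i^* \in X_g$, then $m_i \in S_g$ and $m_i^* = \beta_i \alpha_i^* \in S_{g^{-1}}$, so $n(m_i) = m_i m_i^* \in S_g S_{g^{-1}}$, and the sum lies there as well. It then remains to verify the two identities $\epsilon_g s = s$ and $s \epsilon_{g^{-1}} = s$ for every $s \in S_g$.

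I would first reduce the identity $\epsilon_g s = s$ to the case $s \in X_g$, which is legal by Remark~\ref{rem:XandXg}(b) together with the fact that $R$ commutes with the generators of $L_R(E)$. The crucial claim will then be that for each $s \in X_g$ there exists exactly one index $i$ with $[m_i] \preceq [s]$. Existence follows because the length of the path part of $s$ is a nonnegative integer, so any descending $\preceq$-chain from $[s]$ terminates at a minimal element. For uniqueness, write $s = \gamma \delta^*$, $m_i = \alpha_i \beta_i^*$, $m_j = \alpha_j \beta_j^*$; the hypotheses $[m_i], [m_j] \preceq [s]$ say that $\alpha_i$ and $\alpha_j$ are both initial subpaths of $\gamma$, hence one is an initial subpath of the other, so $[m_i]$ and $[m_j]$ are $\preceq$-comparable, and minimality of both forces $[m_i] = [m_j]$.

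Once uniqueness is in place, Proposition~\ref{propsubpath}(a) yields $n(m_i) s = s$ for this unique $i$. For every $j \neq i$, uniqueness gives $[m_j] \npreceq [s]$, while minimality of $[m_j]$ rules out $[s] \preceq [m_j]$: that relation would force $[s] \sim [m_j]$, hence $[m_j] \preceq [s]$, contradicting uniqueness. Therefore Proposition~\ref{propsubpath}(b) applies and delivers $n(m_j) s = 0$ for $j \neq i$. Summing over $j$ gives $\epsilon_g s = s$. For the right-sided identity $s \epsilon_{g^{-1}} = s$ I would appeal to the graded anti-involution $*$ on $L_R(E)$: it interchanges $S_g$ and $S_{g^{-1}}$, and each $n(m) = mm^*$ is self-adjoint, so $\epsilon_{g^{-1}}^* = \epsilon_{g^{-1}}$ and $(s \epsilon_{g^{-1}})^* = \epsilon_{g^{-1}} s^*$. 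Since $s^* \in S_{g^{-1}}$, applying the already-established left identity (with $g^{-1}$ in place of $g$) gives $\epsilon_{g^{-1}} s^* = s^*$, and applying $*$ once more yields the desired equation.

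The main obstacle is the uniqueness argument for the index $i$ in the second paragraph; all the other steps are routine given the preparatory material (Remark~\ref{rem:XandXg}, Proposition~\ref{propsubpath}, and Proposition~\ref{equivalent}) and the observation that the graph involution makes the right identity formally dual to the left one.
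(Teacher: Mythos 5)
Your proof follows the paper's argument essentially verbatim: the same element $\epsilon_g=\sum_{i=1}^{n_g} n(m_i)$, the same reduction to monomials in $X_g$, the same use of Proposition~\ref{propsubpath} via the unique minimal class below $[s]$, and the same appeal to the involution to deduce the right-hand identity from the left-hand one. The only difference is that you explicitly justify the existence and uniqueness of that minimal class and check the hypotheses of Proposition~\ref{propsubpath}(b) --- steps the paper simply asserts --- so your write-up is, if anything, slightly more complete while resting on exactly the same ingredients (in particular, on the finiteness of the set of minimal elements of $X_g/\sim$ built into the definition of $\epsilon_g$).
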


\begin{proof}
Let $g \in G$ be arbitrary and put $\epsilon_g = \sum_{i=1}^{n_g} \Ng(m_i)$ in $L_R(E)$. 
Clearly, $\epsilon_g \in S_g S_{g^{-1}}$.
Take $\gamma \delta^* \in X_g$.
There is a unique $j \in \{ 1, \ldots , n_g \}$
such that $[m_j] \preceq [ \gamma \delta^* ]$.
From Proposition~\ref{propsubpath} it follows that
$\Ng(m_j) \gamma \delta^* = \gamma \delta^*$ and
if $i \neq j$, then $\Ng(m_i) \gamma \delta^* = 0$.
Thus, $\epsilon_g \gamma \delta^* = \gamma \delta^*$.
The claim now follows from Proposition \ref{propgradedinvolution} and 
Remark~\ref{rem:XandXg}(b).
\end{proof}

\begin{remark}
Consider the directed graph $B$ from the introduction
and suppose that $L_R(B)$ is equipped with a standard $G$-gradation.
From Theorem~\ref{finitegraphepsilonstrongly} it follows that
$L_R(B)$ is epsilon-strongly $G$-graded.
\end{remark}

The following result allows us establish Theorem~\ref{maintheorem}, in particular.

\begin{theorem}\label{graphepsilonstrongly}
If $E$ is a directed graph and $R$ is an associative unital ring, 
then $L_R(E)$ is nearly epsilon-strongly $G$-graded.
\end{theorem}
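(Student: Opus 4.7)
The plan is to verify condition (iii) of Proposition~\ref{sequivalent}. Fix $g \in G$ and $s \in S_g$. By Remark~\ref{rem:XandXg}(b), I may write $s = \sum_{i=1}^n r_i x_i$ with $r_i \in R$ and $x_i \in X_g$. Consider the finite subset $T = \{[x_1], \ldots, [x_n]\}$ of the partially ordered set $(X_g/{\sim}, \preceq)$ provided by Proposition~\ref{proppartialorder}(c), and let $\{[y_1], \ldots, [y_k]\}$ be the set of minimal elements of $T$, with $y_j \in \{x_1, \ldots, x_n\}$. Mimicking the finite-graph construction from Theorem~\ref{finitegraphepsilonstrongly}, I set $\epsilon_g(s) := \sum_{j=1}^k n(y_j)$, which lies in $S_g S_{g^{-1}}$ by definition of $n$.

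The key claim is that $\epsilon_g(s) \, x_i = x_i$ for every $i$, and this reduces to showing that among the $[y_j]$'s there is exactly one with $[y_j] \preceq [x_i]$. Existence is automatic: follow a strictly decreasing chain from $[x_i]$ inside $T$; finiteness forces it to terminate at a minimal element of $T$. Uniqueness rests on a simple observation about paths: writing $x_i = \alpha \beta^*$, any $y_j$ with $[y_j] \preceq [x_i]$ has the form $\alpha'(\cdot)^*$ with $\alpha'$ an initial subpath of $\alpha$, and two initial subpaths of $\alpha$ are nested, so two such $[y_j], [y_{j'}]$ are $\preceq$-comparable and therefore coincide by minimality. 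For the unique such $j$, Proposition~\ref{propsubpath}(a) yields $n(y_j)\, x_i = x_i$. For each $j' \ne j$, I claim $[y_{j'}]$ and $[x_i]$ are $\preceq$-incomparable: the relation $[y_{j'}] \preceq [x_i]$ is ruled out by uniqueness, while $[x_i] \prec [y_{j'}]$ would contradict minimality of $[y_{j'}]$ inside $T$ (since $[x_i] \in T$). Proposition~\ref{propsubpath}(b) therefore gives $n(y_{j'})\, x_i = 0$. Summing, and using that $R$ commutes with the generators of $L_R(E)$ and is thus central, I get $\epsilon_g(s)\, s = \sum_i r_i \, \epsilon_g(s)\, x_i = \sum_i r_i\, x_i = s$.

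For the right identity $\epsilon_g'(s) \in S_{g^{-1}} S_g$, I invoke the anti-graded involution $*$ on $L_R(E)$. Applying the construction above to $s^* \in S_{g^{-1}}$ produces $\eta \in S_{g^{-1}} S_g$ with $\eta s^* = s^*$. Then $\eta^*$ lies in $(S_{g^{-1}} S_g)^* = S_g^* S_{g^{-1}}^* = S_{g^{-1}} S_g$ and satisfies $s\, \eta^* = s$, so $\epsilon_g'(s) := \eta^*$ completes the verification of Proposition~\ref{sequivalent}(iii).

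The main obstacle is handling the ``missing third case'' in Proposition~\ref{propsubpath}: a strict inequality $[x] \succ [y]$ produces $n(x)\, y$ that in general equals neither $y$ nor $0$. The trick is that restricting attention to the minimal elements of the \emph{finite} set $T$ — rather than of the whole poset $X_g/{\sim}$, which may lack minimal elements altogether for a non-finite graph — precisely rules out this configuration among the pairs $([y_{j'}], [x_i])$ that appear in the calculation, and this is exactly what elevates the ``epsilon-strong'' argument to an ``s-unital'' one.
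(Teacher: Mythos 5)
Your proposal is correct and follows essentially the same route as the paper's proof: decompose $s$ into finitely many monomials from $X_g$, sum $n(\cdot)$ over the minimal elements of the resulting finite subset of $X_g/{\sim}$ to get the local left unit, apply Proposition~\ref{propsubpath} for the computation, and transfer to the right via the involution. The only difference is that you spell out the existence/uniqueness of the minimal element below each $[x_i]$ (via nestedness of initial subpaths), which the paper asserts without detail.
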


\begin{proof}
Take $g \in G$ and $s \in S_g$.
There is a finite subset $I$ of $\mathbb{N}$ such that
$s = \sum_{i \in I} r_i \alpha_i \beta_i^*$
for suitable $r_i \in R$ and $\alpha_i \beta_i^* \in X_g$.
Take a subset $J$ of $I$ such that
$\{ [ \alpha_j \beta_j^* ] \}_{j \in J}$ equals the set of
minimal elements in $\{ [ \alpha_i \beta_i^* ] \}_{i \in I}$.
Put $\epsilon_g(s) = \sum_{j \in J} \Ng( \alpha_j \beta_j^* ) \in S_g S_{g^{-1}}$.
For each $i \in I$, there is a unique $j(i) \in J$ such that 
$[ \alpha_{j(i)} \beta_{j(i)}^* ] \preceq [ \alpha_i \beta_i^* ]$.
From Proposition~\ref{propsubpath} it follows that
$\Ng( \alpha_{j(i)} \beta_{j(i)}^* ) \alpha_i \beta_i^* = \alpha_i \beta_i^*$ 
and if $j \neq j(i)$, then 
$\Ng( \alpha_j \beta_j^* ) \alpha_i \beta_i^* = 0$.
Thus, $\epsilon_g(s) s = 
\sum_{i \in I} \sum_{j \in J} r_i \Ng( \alpha_j \beta_j^* ) \alpha_i \beta_i^* =
\sum_{i \in I} r_i \Ng( \alpha_{j(i)} \beta_{j(i)}^* ) \alpha_i \beta_i^* =
\sum_{i \in I} r_i \alpha_i \beta_i^* = s$.
The claim now follows from Proposition \ref{propnearlyinvolution}.
\end{proof}

\begin{exmp}\label{exmp:degreeproblem}
Consider the following finite directed graph $E$:
\begin{displaymath}
	\xymatrix{
	\bullet_{v_1} & \ar[l]_{f_1} \bullet_{v_2}  \ar[r]^{f_2} & \bullet_{v_3} & \ar[l]_{f_3} \bullet_{v_4} & \ar[l]_{f_4} \bullet_{v_5}
	}
\end{displaymath}
Equip $S=L_R(E)$ with its canonical $\Z$-gradation, and view it as a $\Z$-graded ring.
Notice that $v_1$ and $v_3$ are sinks. Hence, by Theorem~\ref{thm:Hazrat2}, $S$ is not strongly $\Z$-graded.
However, $S$ is epsilon-strongly $\Z$-graded, by Theorem~\ref{finitegraphepsilonstrongly}.
We shall now, for each $n\in \Z$, explicitly describe the homogeneous component $S_n$ and the element $\epsilon_n$.
A quick calculation
yields the following:
\begin{align*}
	&S_0 = \spanK_R \{ v_1, v_2, v_3, v_4, v_5, f_1f_1^*, f_2f_3^*, f_2f_2^*, f_3f_2^* \}, \\
	&S_1 = \spanK_R \{ f_1, f_2, f_3, f_4, f_4f_3f_2^* \}, \quad \quad
	S_{-1} = \spanK_R \{ f_1^*, f_2^*, f_3^*, f_4^*, f_2f_3^*f_4^* \}, \\
	&S_2 = \spanK_R \{ f_4f_3 \}, \quad \quad S_{-2} = \spanK_R \{ f_3^*f_4^* \}, \quad \text{and} \quad S_{n} = \{0\}, \text{ for } |n|>2.
\end{align*}
Following the proof of Theorem~\ref{finitegraphepsilonstrongly}
we shall now choose:
\begin{itemize}
	\item $\epsilon_0 = v_1 + v_2 + v_3 + v_4 + v_5 \in S_0$;
	\item $\epsilon_1 = v_2+v_4+v_5 = (f_2f_2^*+f_1f_1^*) + f_3f_3^* + f_4f_4^* \in S_1 S_{-1}$;
	\item $\epsilon_{-1} = f_2f_2^* + v_1 + v_3 + v_4 = f_2f_3^*f_4^* f_4f_3f_2^*
	+ f_1^*f_1 + f_3^*f_3 + f_4^* f_4 \in S_{-1} S_1$;
	\item $\epsilon_2 = v_5 = f_4f_3 f_3^*f_4^* \in S_2 S_{-2}$;
	\item $\epsilon_{-2} = v_3 = f_3^*f_4^* f_4f_3 \in S_{-2} S_2$;
	\item $\epsilon_n = 0$, if $|n|>2$.
\end{itemize}
It is not difficult to see that, for any $n\in \Z$, $\epsilon_n$ satisfies the conditions in 
Proposition~\ref{equivalent}(iii).
\end{exmp}

The next example shows that Theorem~\ref{finitegraphepsilonstrongly} can not be generalized to
arbitrary (possibly non-finite) directed graphs $E$.

\begin{exmp}\label{exmp:notepsilongraded}
Consider the graph from the introduction
$C:	\xymatrix{
	\bullet_{v_1} \ar[r]^{(\infty)} & \bullet_{v_2}. \\
	}$
For this graph $C^1=\{f_1,f_2,f_3,\ldots\}$ is infinite.
Is $L_R(C)$ epsilon-strongly $\Z$-graded?
We begin by writing out
some of the homogeneous components of the canonical $\Z$-gradation on $S=L_R(C)$.
\begin{itemize}
	\item $S_0 = \spanK_R \{ v_1, v_2, f_1 f_1^*, f_1 f_2^*, \ldots, f_i f_j^* , \ldots \}$
	\item $S_1 = \spanK_R \{f_1, f_2, f_3, \ldots \}$
	\item $S_{-1} = \spanK_R \{f_1^*, f_2^*, f_3^*, \ldots \}$
\end{itemize}
If $S_1$ had only contained a finite number of elements,
then we could have put $\epsilon_1 = \sum_{f \in C^1} ff^*$.
However, in the current situation that is not possible.
The only option left for us is to put $\epsilon_1 = v_1$.
We need to check whether the conditions in Definition~\ref{def:epsilongraded} are satisfied,
and in particular whether $v_1 \in S_1 S_{-1}$.
Seeking a contradiction, suppose that
$v_1 = \sum_{i\in F} r_i f_{p_i} f_{q_i}^*$,
where $p_i,q_i \in \N^+$,
for some finite set $F$.
For any $k \in \N^+$ we get
\begin{displaymath}
	0 \neq v_2 = f_k^* v_1 f_k = \sum_{i\in F} r_i f_k^* f_{p_i} f_{q_i}^* f_k
\end{displaymath}
which in particular yields that there is some $i\in F$ such that
$k=p_i=q_i$. Thus, $F$ is not finite, which is a contradiction.
This shows that $\epsilon_1 \notin S_1 S_{-1}$
and hence $S=L_R(C)$ is not epsilon-strongly $\Z$-graded.
\end{exmp}

\begin{remark}
By Example~\ref{exmp:degreeproblem} and
Example~\ref{exmp:notepsilongraded} we have
established the following chain of strict inclusions:
\begin{align*}
& \{ \mbox{unital strongly $\Z$-graded Leavitt path algebras} \} \\
& \subsetneq \{ \mbox{epsilon-strongly $\Z$-graded Leavitt path algebras} \} \\
& \subsetneq \{ \mbox{nearly epsilon-strongly $\Z$-graded Leavitt path algebras} \}.
\end{align*}
\end{remark}

\begin{prop}\label{nondegenerate}
If $E$ is a directed graph and $R$ is an associative unital ring,
then the Leavitt path algebra $L_R(E)$ is
symmetrically $G$-graded,
strongly right non-degenerately $G$-graded
and
strongly left non-degenerately $G$-graded.
\end{prop}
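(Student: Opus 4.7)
The plan is to derive Proposition~\ref{nondegenerate} as an immediate corollary of two facts already established in the excerpt, without any additional graph-theoretic work. Specifically, Theorem~\ref{graphepsilonstrongly} asserts that for any directed graph $E$ and any associative unital ring $R$, the Leavitt path algebra $L_R(E)$ is nearly epsilon-strongly $G$-graded when equipped with a standard $G$-gradation, and Proposition~\ref{epsilonweakly} asserts that any nearly epsilon-strongly $G$-graded ring is automatically symmetrically $G$-graded and strongly non-degenerately $G$-graded on both sides.

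Concretely, I would first invoke Theorem~\ref{graphepsilonstrongly} applied to the given $E$, $R$, and $G$ to obtain that $L_R(E) = \bigoplus_{g \in G} S_g$ is nearly epsilon-strongly $G$-graded, i.e.\ each bimodule $S_g$ is s-unital over $S_g S_{g^{-1}}$--$S_{g^{-1}} S_g$. I would then feed this ring into Proposition~\ref{epsilonweakly}, which directly yields the three claimed properties: symmetry of the gradation, strong right non-degeneracy, and strong left non-degeneracy. Since both input results are already proved, there is essentially no obstacle; the only thing worth checking in passing is that the $G$ and the standard $G$-gradation in Proposition~\ref{nondegenerate} match the ones used in Theorem~\ref{graphepsilonstrongly} (they do, by the standing assumption of the section), so no additional argument or calculation is needed.
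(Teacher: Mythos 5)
Your proposal is correct and is exactly the paper's own argument: the paper likewise proves this proposition by combining Theorem~\ref{graphepsilonstrongly} with Proposition~\ref{epsilonweakly}. Nothing further is needed.
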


\begin{proof}
This follows immediately from Proposition~\ref{epsilonweakly} and 
Theorem~\ref{graphepsilonstrongly}.
\end{proof}

\begin{remark}
The ''symmetric'' part of Proposition~\ref{nondegenerate} can be shown 
directly without
the use of
Theorem~\ref{graphepsilonstrongly}.
Indeed, take $g\in G$.
If $S_g = \{0\}$, then clearly $S_g S_{g^{-1}} S_g = S_g$ holds.
Now, suppose that $S_g \neq \{0\}$.
It follows immediately from the $G$-gradation that $S_g S_{g^{-1}} S_g \subseteq S_g$ holds.
To show the reversed inclusion, take an arbitrary non-zero monomial $\alpha \beta^* \in S_g$.
Then $r(\alpha)=r(\beta)$ and $\beta \alpha^* \in S_{g^{-1}}$. We get
$	\alpha \beta^* = \alpha r(\beta) r(\alpha) \beta^*
	= \alpha (\beta^* \beta) (\alpha^* \alpha) \beta^*
	= (\alpha \beta^*) (\beta \alpha^*) (\alpha \beta^*) \in S_g S_{g^{-1}} S_{g}.$
This shows that $S_g \subseteq S_g S_{g^{-1}} S_{g}$.
\end{remark}

We end this section with an application to Frobenius extensions.
To this end, recall the following.

\begin{defn}
Suppose that $S/T$ is a ring extension. 
By this we mean that $T \subseteq S$ and both $S$ and $T$ 
are unital with a common multiplicative identity.
The ring extension $S/T$ is called {\it Frobenius} if there is a
finite set $J$, $x_j , y_j \in S$, for $j \in J$, and a $T$-bimodule
map $E : S \rightarrow T$ such that, for every $s \in S$, the equalities
$s = \sum_{j \in J} x_j E(y_j s) = \sum_{j \in J} E(s x_j) y_j$ hold.
In that case, $(E , x_j , y_j)$ is called a {\it Frobenius system}.
\end{defn}

In \cite[Theorem 25]{NOP2016} Nystedt, \"{O}inert and Pinedo have
shown the following result.

\begin{theorem}
Suppose that $S$ is a ring which is epsilon-strongly graded by a finite group $G$. 
If we put $T = S_e$, then $S/T$ is a Frobenius extension.
\end{theorem}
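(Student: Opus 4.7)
The plan is to construct an explicit Frobenius system $(E,\{x_j\},\{y_j\})$ by taking $E\colon S\to T$ to be the projection onto the identity component and extracting the $x_j$'s and $y_j$'s from decompositions of the epsilon elements $\epsilon_g$, then letting the grading collapse everything.

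First I would confirm that $S/T$ is a ring extension in the sense of the definition. By the remark following Proposition~\ref{equivalent}, $T=S_e$ is unital with identity $\epsilon_e$, and each $\epsilon_g$ lies in $S_e$. For $s\in S_g$, since $\epsilon_g = \epsilon_e\epsilon_g$ in $S_e$, one has $\epsilon_e s = \epsilon_e(\epsilon_g s) = \epsilon_g s = s$, and similarly $s\epsilon_e = (s\epsilon_{g^{-1}})\epsilon_e = s\epsilon_{g^{-1}} = s$. Hence $\epsilon_e$ is the multiplicative identity of $S$, so $S$ and $T$ share a common identity.

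Next, define $E\colon S\to T$ by $E\bigl(\sum_g s_g\bigr)=s_e$. Since left or right multiplication by a degree-$e$ element preserves the degree of each homogeneous summand, $E$ is a $T$-bimodule map. For each $g\in G$, use that $\epsilon_g\in S_g S_{g^{-1}}$ to pick a decomposition $\epsilon_g=\sum_{i=1}^{n_g} a^{(g)}_i b^{(g)}_i$ with $a^{(g)}_i\in S_g$ and $b^{(g)}_i\in S_{g^{-1}}$. Because $G$ is finite, the index set $J=\{(g,i) : g\in G,\ 1\le i\le n_g\}$ is finite. Set $x_{(g,i)}=a^{(g)}_i$ and $y_{(g,i)}=b^{(g)}_i$.

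To verify the Frobenius identities it suffices, by additivity, to test them on a homogeneous element $s_h\in S_h$. For any $(g,i)\in J$, the product $y_{(g,i)}s_h = b^{(g)}_i s_h$ lies in $S_{g^{-1}h}$, so $E(y_{(g,i)}s_h)=0$ unless $g=h$. Collecting the surviving terms yields
\[
\sum_{(g,i)\in J} x_{(g,i)}\,E(y_{(g,i)}s_h) \;=\; \sum_{i=1}^{n_h} a^{(h)}_i b^{(h)}_i s_h \;=\; \epsilon_h s_h \;=\; s_h,
\]
and a symmetric argument (using that $s_h a^{(g)}_i\in S_{hg}$, so only $g=h^{-1}$ survives, together with $s_h\epsilon_{h^{-1}}=s_h$) gives $\sum_{(g,i)\in J} E(s_h x_{(g,i)})y_{(g,i)} = s_h$.

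The main conceptual point is the recognition that the epsilon elements $\{\epsilon_g\}_{g\in G}$ package exactly the pairs of dual bases required for a Frobenius system, with $E$ the grading projection. There is no serious obstacle: the finiteness of $G$ is used only to guarantee that $J$ is finite (finiteness of each individual $n_g$ is automatic), while the bookkeeping with degrees is immediate from the direct-sum decomposition.
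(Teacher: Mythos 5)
Your proof is correct. Note that the paper does not actually prove this statement itself --- it is quoted from \cite[Theorem 19]{NOP2016} --- but your argument (taking $E$ to be the projection onto $S_e$ and extracting the dual bases $x_{(g,i)}=a_i^{(g)}$, $y_{(g,i)}=b_i^{(g)}$ from decompositions of the elements $\epsilon_g\in S_gS_{g^{-1}}$, with the grading killing all cross terms) is precisely the standard proof given in that reference, and all the steps, including the verification that $\epsilon_e$ is a common identity for $S$ and $T$ and that finiteness of $G$ is what makes $J$ finite, check out.
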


As a direct consequence of the above result,
in combination with Theorem~\ref{finitegraphepsilonstrongly},
we get the following result.

\begin{theorem}\label{maintheoremfrobenius}
Let $E$ be a finite directed graph and $R$ an associative unital ring.
If $G$ is a finite group and we equip the Leavitt path algebra $L_R(E)$ with 
a standard $G$-gradation, then the ring extension 
$L_R(E)/L_R(E)_e$ is Frobenius.
\end{theorem}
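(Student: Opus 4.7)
The strategy is to combine the two results that the preceding discussion has already assembled: Theorem~\ref{finitegraphepsilonstrongly} shows that for any group $G$ and any finite directed graph $E$ the algebra $L_R(E)$ is epsilon-strongly $G$-graded, while \cite[Theorem 19]{NOP2016}, quoted just above the statement, promotes an epsilon-strong gradation by a finite group into a Frobenius extension of the identity component. The theorem simply asserts the composition of these two facts, so the proof ought to be essentially a one-liner.

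Before invoking \cite[Theorem 19]{NOP2016}, however, I would briefly check that the hypotheses needed for a \emph{ring extension} in the sense of the paper are in force. Recall that by the definition given above, a Frobenius extension $S/T$ requires $S$ and $T$ to be unital with a common multiplicative identity. Since $E^0$ is finite, relations~(1) and (2) of Definition~\ref{def:LPA} ensure that $\mathbf{1} := \sum_{v\in E^0} v$ is a two-sided multiplicative identity for $L_R(E)$; and because $\deg(v)=e$ for every $v\in E^0$ under any standard $G$-gradation, this element lies in the identity component $L_R(E)_e$. Hence $L_R(E)$ and $L_R(E)_e$ share a common unit, so the extension $L_R(E)/L_R(E)_e$ is a ring extension in the required sense.

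With this verified, the proof proceeds in two steps. First, apply Theorem~\ref{finitegraphepsilonstrongly} to conclude that $L_R(E)$ is epsilon-strongly $G$-graded. Second, since $G$ is assumed finite, the cited \cite[Theorem 19]{NOP2016} applies with $S=L_R(E)$ and $T=L_R(E)_e$, producing a Frobenius system and exhibiting $L_R(E)/L_R(E)_e$ as a Frobenius extension.

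There is essentially no obstacle, as the substantive content has been packaged into the two inputs. The only mildly delicate point is the common-unit requirement, which is precisely why the finiteness of $E^0$ is used beyond merely supplying the epsilon-strong gradation; for infinite $E^0$ the algebra $L_R(E)$ lacks a multiplicative identity and the Frobenius framework quoted here would need to be reformulated in a non-unital or $s$-unital setting.
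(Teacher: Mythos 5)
Your proof is correct and follows exactly the paper's route: the paper derives this theorem as an immediate consequence of Theorem~\ref{finitegraphepsilonstrongly} combined with \cite[Theorem 19]{NOP2016}. Your additional check that $\sum_{v\in E^0} v$ is a common multiplicative identity lying in $L_R(E)_e$ is a sensible (and correct) verification that the paper leaves implicit.
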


\section{Strongly $\mathbb{Z}$-graded rings}\label{sec:Zgraded}

Throughout this section $S$ denotes a, not necessarily unital, $\mathbb{Z}$-graded ring.

\begin{prop}\label{prop:stronglyZgraded}
The ring $S$ is strongly $\Z$-graded if and only if 
for every $n \in \mathbb{Z}$ the $S_0$-bimodule $S_n$ is unital, 
and the equalities $S_1 S_{-1} = S_{-1} S_1 = S_0$ hold. 
\end{prop}

\begin{proof}
The ''only if'' statement is immediate.
Now we show the ''if'' statement.
Take positive integers $m$ and $n$.
First we show by induction that $S_m = (S_1)^m$.
The base case $m=1$ is clear. Next suppose that 
$S_m = (S_1)^m$. Then we get that 
$S_{m+1} = S_0 S_{m+1} = S_1 S_{-1} S_{m+1} \subseteq
S_1 S_{-1 + m+1} = S_1 S_m = S_1 (S_1)^m = (S_1)^{m+1} \subseteq S_{m+1}$.
Next we show by induction that $S_{-n} = (S_{-1})^n$.
The base case $n=1$ is clear. Next suppose that 
$S_{-n} = (S_{-1})^n$. Then we get that 
$S_{-n-1} = S_{-n-1} S_0 = S_{-n-1} S_1 S_{-1} \subseteq 
S_{-n-1+1} S_{-1} = S_{-n} S_{-1} = (S_{-1})^n S_{-1} = (S_{-1})^{n+1} \subseteq S_{-n-1}$.

Case 1: $S_m S_n = (S_1)^m (S_1)^n = (S_1)^{m+n} = S_{m+n}$.

Case 2: $S_{-m} S_{-n} = (S_{-1})^m (S_{-1})^n = (S_{-1})^{m+n} = S_{-m-n}$.

Case 3: Now we show that $S_m S_{-n} = S_{m-n}$.
We get that 
$S_m S_{-n} = (S_1)^m (S_{-1})^n$.
By repeated application of the equality $S_1 S_{-1} = S_0$,
we get that $(S_1)^m (S_{-1})^n = (S_1)^{m-n} = S_{m-n}$,
if $m \geq n$, or $(S_1)^m (S_{-1})^n = (S_{-1})^{n-m} = S_{n-m}$, otherwise.

Case 4: $S_{-m} S_n = S_{n-m}$. This is shown in a similar fashion
to Case 3, using the equality $S_{-1} S_1 = S_0$, and is therefore left to the reader.
\end{proof}

\begin{cor}\label{cor:Zstrong}
If for every $n \in \mathbb{Z}$, the $S_0$-bimodule $S_n$ is s-unital,
then $S$ is strongly $\Z$-graded if and only if the equalities $S_1 S_{-1} = S_{-1} S_1 = S_0$ hold.
\end{cor}

\begin{cor}\label{cor:symZstrong}
If $S$ is symmetrically $\mathbb{Z}$-graded, then $S$ is strongly $\Z$-graded
if and only if the equalities $S_1 S_{-1} = S_{-1} S_1 = S_0$ hold.
\end{cor}

\begin{cor}\label{cor:epsilonZstrong}
If $S$ is epsilon-strongly $\mathbb{Z}$-graded, then $S$ is strongly 
$\Z$-graded if and only if $\epsilon_1 = \epsilon_{-1} = 1$.
\end{cor}

\section{Strongly $\mathbb{Z}$-graded Leavitt path algebras}\label{sec:ZgradedLPA}

Throughout this section, $E$ denotes a directed graph,
$R$ denotes an associative unital ring and we equip
the associated Leavitt path algebra $S=L_R(E)$ with its canonical $\mathbb{Z}$-gradation.
Recall that if $E$ is finite, then $L_R(E)$ is epsilon-strongly $\Z$-graded by 
Theorem~\ref{finitegraphepsilonstrongly}.
In that case, we will use the notation established in Section~\ref{sec:leavittpathalgebras} without further mention.

\begin{lemma}\label{lem:epsilonrelations}
If $E$ is a finite directed graph, then
$\epsilon_1 = \sum_{v \in E^0 \setminus \{ {\rm sinks} \} } v$.
\end{lemma}

\begin{proof}
From the definition of the relation $\preceq$ it follows that
$\{ [ \alpha ] \mid \alpha \in E^1 \}$ is a set of minimal 
elements of $X_1 / \! \sim$. Thus
\begin{displaymath}
	\epsilon_1 = 
\sum_{\alpha \in E^1} \NOne( \alpha ) =
\sum_{\alpha \in E^1} \alpha \alpha^* = 
\sum_{v \in E^0 \setminus \{ {\rm sinks} \} } \sum_{ \stackrel{ \alpha \in E^1}{ \ s(\alpha)=v} } \alpha \alpha^* =
\sum_{v \in E^0 \setminus \{ {\rm sinks} \} } v.
\end{displaymath}
\end{proof}

\begin{lemma}\label{lem:directsum}
If $E$ is a finite directed graph which has no sink,
then for every $v \in E^0$ there exist $n(v) \in \mathbb{N}$
and $m_{1,v},\ldots,m_{n(v),v} \in X_{-1}$ such that no pair of
elements from $[m_{1,v}],\ldots,[m_{n(v),v}]$
has a common lesser element in $X_{-1} / \! \sim$, with respect to $\preceq$,
and $v = \sum_{i=1}^{n(v)} \mathcal{N}_{-1} ( m_{i,v} ) \in S_{-1} S_1$.
\end{lemma}

\begin{proof}
Take $v \in E^0$. Since $E$ is finite and has no sink, it is clear that we can make repeated use
of property (4) in Definition~\ref{def:LPA} to find $n(v) \in \mathbb{N}$ and $\alpha_{i,v} \in E^*$,
for $i \in \{1,\ldots,n(v)\}$, such that $r(\alpha_{i,v})$ is a vertex 
belonging to a cycle $C_i$ in $E$,
and $v = \sum_{i=1}^{n(v)} \alpha_{i,v} \alpha_{i,v}^*$. For each $i \in \{ 1,\ldots,n(v) \}$,
choose a path $\beta_i$ in $C_i$ such that $r(\beta_i) = r( \alpha_{i,v} )$ and 
$\alpha_{i,v} \beta_i^* \in X_{-1}$. Then $\beta_i \alpha_{i,v}^* \in X_1$.
For each $i \in \{ 1,\ldots,n(v) \}$, put $m_i = \alpha_{i,v} \beta_i^*$. Then 
\begin{align*}
v &= \sum_{i=1}^{n(v)} \alpha_{i,v} \alpha_{i,v}^* 
  = \sum_{i=1}^{n(v)} \alpha_{i,v} r( \alpha_{i,v} ) \alpha_{i,v}^* 
   = \sum_{i=1}^{n(v)} \alpha_{i,v} r( \beta_i ) \alpha_{i,v}^* \\
  &= \sum_{i=1}^{n(v)} \alpha_{i,v} \beta_i^* \beta_i \alpha_{i,v}^* 
   = \sum_{i=1}^{n(v)} m_{i,v} m_{i,v}^* 
   = \sum_{i=1}^{n(v)} \mathcal{N}_{-1}(m_{i,v}) \in S_{-1} S_1.
\end{align*}
Notice that the paths $\alpha_{1,v}, \ldots, \alpha_{i,n(v)}$ are all distinct.
Thus, no pair of elements from
\linebreak
$[m_{1,v}],\ldots,[m_{n(v),v}]$ has a common lesser element in $X_{-1} / \! \sim$.
\end{proof}

We now prove Theorem~\ref{thm:Hazrat2}
using an approach different from Hazrat's \cite{H2013A}.

\begin{prop}\label{prop:Hazrat}
If $E$ is a finite directed graph, then $L_R(E)$ is strongly $\Z$-graded if and only if $E$
has no sink.
\end{prop}

\begin{proof}
The ''only if'' statement follows from Lemma \ref{lem:epsilonrelations}, and also from Lemma~\ref{lem:NecCondLPAstrong}.
Now we show the ''if'' statement.
We claim that for all $v \in E^0$ the equality $\epsilon_{-1} v = v$ holds.
If we assume that the claim holds, then we get that
$\epsilon_{-1} = \epsilon_{-1} 1 = \sum_{v \in E^0} \epsilon_{-1} v = 
\sum_{v \in E^0} v = 1$.
From Lemma \ref{lem:epsilonrelations} we get that $\epsilon_1 = 1$.
Thus, by Corollary \ref{cor:epsilonZstrong}, we get that $L_R(E)$ is strongly $\Z$-graded.

Now we show the claim.
Take $v\in E^0$. By Lemma~\ref{lem:directsum} we may write
$v = \sum_{i=1}^{n(v)} \mathcal{N}_{-1} ( m_{i,v} )$
for suitable $n(v) \in \N$ and
$m_{1,v},\ldots,m_{n(v),v} \in X_{-1} \subseteq S_{-1}$.
Thus,
$\epsilon_{-1} v = \epsilon_{-1} \sum_{i=1}^{n(v)} \mathcal{N}_{-1} ( m_{i,v} )
= \sum_{i=1}^{n(v)} \epsilon_{-1} m_{i,v} m_{i,v}^*
= \sum_{i=1}^{n(v)} m_{i,v} m_{i,v}^* = v$.
\end{proof}

\begin{lemma}\label{lem:NecCondLPAstrong}
Let $E$ be an arbitrary directed graph.
The following assertions hold:

\begin{itemize}
	\item[(a)] If $L_R(E)$ is strongly $\Z$-graded, then $E$ has no sink.
	\item[(b)] If $E$ is row-finite and has no sink, then $S_1 S_{-1} = S_0$.
	\item[(c)] If $E$ has no source, then $S_{-1} S_1 = S_0$.
\end{itemize}
\end{lemma}

\begin{proof}
(a)
Suppose that $S=L_R(E)$ is strongly $\Z$-graded.
Seeking a contradiction, suppose that there is a sink $v$ in $E$.
Then $v \in S_0 = S_1 S_{-1}$.
Using that $v$ is a sink, we get that $v = v^2 \in v S_1 S_{-1} = \{0\}$.
This is a contradiction.

(b)
It suffices to show that $E^0 \subseteq S_1 S_{-1}$.
Take $v\in E^0$.
Then $v = \sum_{s(f)=v} ff^* \in S_1 S_{-1}$.

(c)
It suffices to show that $E^0 \subseteq S_{-1} S_1$.
Take $v\in E^0$.
Choose some $f\in E^1$ such that $v=r(f)=f^*f \in S_{-1} S_1$.
\end{proof}

The following results generalizes \cite[Theorem 1.6.15]{HazratBook}.

\begin{theorem}\label{thm:rowfinite}
Let $E$ be a row-finite directed graph which has no source.
Then $L_R(E)$ is strongly $\Z$-graded
if and only if $E$ has no sink.
\end{theorem}

\begin{proof}
The ''only if'' statement follows from Lemma~\ref{lem:NecCondLPAstrong}(a).
The ''if'' statement follows from
Lemma~\ref{lem:NecCondLPAstrong}(b)-(c),
Proposition~\ref{nondegenerate}
and Corollary~\ref{cor:symZstrong}.
\end{proof}

\end{document}